\def\disp{\displaystyle}
\def\crr{\cr\noalign{\vskip1.3mm}}
\def\dref#1{(\ref{#1})}
\newtheorem{remark}{{\it Remark}}[section]
\newcommand{\R}{{\mathbb R}}
\def\A{\mathcal{A}}
\def\A{\mathcal{A}}
\begin{document}

\title{ Boundary Stabilization  and Observation  of  a  Multi-dimensional Unstable
Heat Equation
 \thanks{This work is supported  by
the National Natural Science Foundation of China (12071463,61873153,12130008).
Corresponding author: Hongyinping Feng.}
 }

\author{ Yusen Meng
\footnotemark[1] \footnotemark[2]
\ \ Hongyinping Feng
\footnotemark[3]}

\renewcommand{\thefootnote}{\fnsymbol{footnote}}
\footnotetext[1]{\scriptsize
School of Mathematical Sciences, University of Chinese Academy of Sciences, Beijing 100049, P.R. China.}
\footnotetext[2]{\scriptsize
Key Laboratory of Systems and Control, Institute of Systems Science, Academy of Mathematics
and Systems Science, Chinese Academy of Sciences, Beijing 100190, P.R. China. Email: mengyusen@amss.ac.cn.}
\footnotetext[3]{\scriptsize School of Mathematical Sciences, Shanxi University,  Taiyuan, Shanxi, 030006, P.R. China. Email: fhyp@sxu.edu.cn. }

  \slugger{sicon}{2022}{0}{0}{000--000}

\setcounter{page}{1} \maketitle

\begin{abstract}
In this paper, we consider the boundary stabilization  and observation of the  multi-dimensional unstable heat equation. Since we consider the heat equation in a  general domain,
   the usual partial differential equation backstepping method is   hard   to apply to  the considered problems.
  The unstable dynamics of the heat equation are treated by
combining   the finite-dimensional spectral truncation method and
  the dynamics compensation method.
 By introducing additional finite-dimensional actuator/sensor dynamics, the unbounded stabilization/observation turns   to be a bounded one. As a result, the controller/observer design becomes more easier.
 Both the full state feedback stabilizer and the  state observer are designed.
 The exponential stability of the closed-loop and the well-posedness of the observer are obtained.



\end{abstract}

\begin{keywords}
Dynamic compensation,   unstable heat equation,
 observer, stabilization.
\end{keywords}

\begin{AMS}
 93B07, 37N35, 34C28, 35L10.
\end{AMS}

\pagestyle{myheadings} \thispagestyle{plain} \markboth{ YUSEN MENG AND HONGYINPING
FENG}{ STABILIZATION  AND OBSERVATION  OF    
HEAT EQUATION}

\section{Introduction}
When there is at least one point spectrum    in the  right-half complex plane, the system is referred to as an unstable system.
Owing to the  pole assignment theorem,
stabilization of the finite-dimensional unstable system is almost trivial.
However, the problem may become  pretty  difficult for the infinite-dimensional unstable system.
When there are only finite  point spectrums in the  right-half complex plane,
the system can be stabilized by  the finite  spectral truncation technique \cite{Triggiani1980}, \cite{Russell1978}.
 Since  heat equation  with  the  boundary or interior  source   term  is  usually an unstable system   which has  finite unstable  point spectrums,
 the finite spectral truncation technique can be  used to
 stabilize the  unstable heat  system. See, for instance,   \cite{CoronTrelat2004SICON}, \cite{Prieur2020TAC} and \cite{Prieur2020TAC2},  to name just a few.

    The partial differential equation (PDE) backstepping method is another way to stabilize  the infinite-dimensional unstable system.   It has been used to    stabilize  the  unstable heat
equations  in
    \cite{Liuw},   \cite{Smyshlyaev2005}  and \cite{WangSCL2015}. The PDE backstepping method can also cope with  other infinite-dimensional systems such as
    the first order hyperbolic equation system \cite{SmyshlyaevKrstic2008SCL},
      the
    unstable wave equation system \cite{Kristic2008} and even for  ODE-PDE cascade system \cite{AutoWang}.
     Although the PDE backstepping is powerful,
    it
      is still hard to apply to the general  multi-dimensional infinite-dimensional system.
      Very recently,
      \cite{Feng2021} has considered the   stabilization and observation of the unstable heat equation in a general multi-dimensional region.
        By using  the dynamic compensation method   \cite{Feng2008}, \cite{Feng2009} and the      finite dimensional spectral truncation method \cite{CoronTrelat2004SICON}, both the  full state feedback
  and the state observer are proposed for the unstable multi-dimensional heat equation under the assumption that the unstable point spectrums are  algebraic simple.
     In this paper, we shall extend the results in  \cite{Feng2021}
     to the more general case.


Let   $\Omega\subset \R^n (n \geq2)$   be  a bounded domain with $C^2$-boundary
 $\Gamma $. Suppose that  $\Gamma $  consists of two parts: $ \Gamma_0 $  and $ \Gamma_1 $, $ \Gamma_0\cup\Gamma_1=\Gamma $, with
   $ \Gamma_0 $ is a non-empty connected open set in $ \Gamma$.
    Let $\nu$ be the unit
outward normal vector of $\Gamma_1$ and let $\Delta$ be the usual  Laplacian.
We consider the following system
\begin{equation} \label{1.1}
	\begin{cases} w_{t}(x,t)=\Delta w(x,t)+\mu w(x,t)  ,\qquad (x,t)\in \Omega\times	(0,+\infty),\crr
		w(x,t)=0,\qquad (x,t)\in \Gamma_0\times(0,+\infty),\crr
		\disp \frac{\partial w(x,t)}{\partial \nu} = u(x,t), \qquad (x,t)\in \Gamma_1\times(0,+\infty),\crr
		y(x,t)=w(x,t), \qquad (x,t)\in \Gamma_1\times(0,+\infty),
	\end{cases}
\end{equation}
where $\mu>0$  is a    positive constant, $u$ is the  control and $y$ is the output.
Owing to the  source  term $\mu w(x,t)$,
 system \dref{1.1} is   unstable provided $\mu $ is large enough.
 The main goal of this paper is to design an output feedback to stabilize system \dref{1.1} exponentially. By the  separation principle
of the linear systems, the output feedback will be available once we address the following two problems: (i)
stabilize system \dref{1.1}  by a full state feedback; (ii)  design
a state observer to estimate the state online.
We will consider these two problems separately.

We
consider system (\ref{1.1}) in state space $L^2(\Omega)$. Define
\begin{equation}
	\label{1.2}
	\begin{cases} Af=\Delta f,  \ \ \forall f\in D(A)
		\disp  =\left\{  f\in H^2(\Omega)\cap H^1_{\Gamma_0}(\Omega)\ |\  \frac{\partial f}{\partial \nu}(x)=0, x\in {\Gamma_1}\right\} ,\crr
		\disp H^1_{\Gamma_0}(\Omega)=\{f\in H^1(\Omega)\ |\  f(x)=0 ,x\in{\Gamma_0}\}.
	\end{cases}
\end{equation}	
Then  $A$
   generates an exponentially
stable analytic semigroup on $L^2(\Omega)$.
It is well known (e.g. \cite[p.668]{Lasiecka2000}) that
 $D((-A)^{1/2})=H_{\Gamma_0}^1(\Omega)$ and $(-A)^{1/2}$ is a canonical
isomorphism from   $H^1_{\Gamma_0}(\Omega)$ onto  $L^2(\Omega)$.
Let $[D((-A)^{1/2} )]'=H^{-1}_{\Gamma_0}(\Omega)$  be  the   dual space  of $H^{ 1}_{\Gamma_0}(\Omega)$ with the pivot
space  $L^2(\Omega)$.
We obtain the following
Gelfand triple compact inclusions:
\begin{equation}\label{Add12264.3}
\begin{array}{l}
 D((-A)^{1/2})\hookrightarrow L^2(\Omega)
 =[L^2(\Omega)]'\hookrightarrow
[D((-A)^{1/2} )]' .
\end{array}
\end{equation}
   An extension $\tilde{A}\in
{\mathcal{L}}(H^{1}_{\Gamma_0}(\Omega), H^{-1}_{\Gamma_0}(\Omega))$ of $A$ is defined
by
 \begin{equation}\label{2020981918}
  \langle \tilde Ax,z\rangle_{H^{-1}_{\Gamma_0}(\Omega),
 H^{1}_{\Gamma_0}(\Omega)}
   =-\langle (-A)^{\frac12}x,(-A)^{\frac12}z\rangle_{L^2(\Omega)},
   \ \   \forall \; x,z\in H^{1}_{\Gamma_0}(\Omega).
 \end{equation}
  By a simple computation, the eigenpairs   $\{(\phi_j(\cdot),\lambda_j)\}_{j=1}^{\infty}$ of $A$ satisfy
 	\begin{equation}
		\label{1.1.5}
		\begin{cases} \Delta \phi_j=\lambda_j \phi_j,  \qquad x\in\Omega,  \crr
			\disp \phi_j(x)=0, x\in\Gamma_0;\ \ \
			\frac{\partial\phi_j(x)}{\partial \nu}=0, x\in\Gamma_1,
		\end{cases}\qquad  j=1,2,\cdots.
	\end{equation}

Since  the operator $A $ defined by \dref{1.2}  is self-adjoint and negative with compact resolvents, it follows from \cite[p.76, Proposition 3.2.12]{Tucsnak 2009} that the eigenvalues $\{\lambda_j\}_{j=1}^{\infty}$ are real and we can repeat each eigenvalue according to its finite multiplicity to get
\begin{equation} \label{1.1.4}
		0>\lambda_1\geq \lambda_2\geq\ldots \geq\lambda_N> \lambda_{N+1} \ldots \rightarrow -\infty.
	\end{equation}
Without loss of the generality, we assume that

\newtheorem{assumption}{Assumption}[section]
\begin{assumption}\label{assumption 1.1}
 Let the operator $A$ be given by (\ref{1.2}).
Suppose that the eigenpairs $\{(\phi_j(\cdot),\lambda_j)\}_{j=1}^{\infty}$ of $A$ satisfy  \dref{1.1.4} and
$\|\phi_j\|_{L^2(\Omega)}=1$.  Suppose that
	  there exists a constant $N>0$ such that
	\begin{equation} \label{1.1.4.1}
		\lambda_N+\mu\geq 0 \ \ \mbox{and}\ \   \lambda_{N+1}+\mu<0.
	\end{equation}

\end{assumption}

Define
the Neumann map $\Upsilon\in \mathcal{L}(L^2(\Gamma_1), $ $
H^{3/2}(\Omega))$ \cite[p.668]{Lasiecka2000} by $\Upsilon u=\psi$ if and
only if
\begin{equation}
\label{1.1.51}
\left\{\begin{array}{l}\disp \Delta \psi=0 \ \hbox{ in }\ \Omega,
\crr \disp \psi|_{\Gamma_0}=0; \qquad  \frac{\partial
\psi}{\partial\nu}\big|_{\Gamma_1}=u.
\end{array}\right.
\end{equation}
Using the Neumann map, one can write (\ref{1.1}) abstractly in $H^{-1}_{\Gamma_0}(\Omega)$ as
\begin{equation}\label{1.1.6}
	\begin{cases} w_{t}(\cdot,t)=(\tilde{A}+\mu) w(\cdot,t)+Bu(\cdot,t),\qquad t>0,\crr
		y(\cdot,t)=B^{*} w(\cdot,t), \qquad t\geq 0,
	\end{cases}
\end{equation}
where $\tilde{A}  $ is the extension of $A$ given by \dref{2020981918}, $B\in \mathcal{L}(L^2(\Gamma_1),H^{-1}_{\Gamma_0}(\Omega))$ is defined by
\begin{equation} \label{1.1.6.1}
	Bu=-\tilde{A}\Upsilon u,\qquad \forall\ u\in L^2(\Gamma_1),
\end{equation}
and
 $B^{*}$ is the adjoint of $B$, given by
 \begin{equation}\label{1.1.7}
	B^{*}f= f|_{\Gamma_1}, \qquad \forall\ f\in H^1_{\Gamma_0}(\Omega).
\end{equation}
%
%
%

The rest of the paper is organized as follows: In Section 2,
we give some  preliminaries for the full state feedback design.
Section 3 is devoted to the full state feedback design.  The
 exponential stability of the closed-loop system is also demonstrated in Section 3.
    Section 4 gives   some preliminaries for the  observer design
    which will be considered in
    Section 5.
       Section 6 concludes our paper and presents an outlook for future work. Some results that are less relevant to stabilizer and  observer design are arranged in  the Appendix.

\vskip0.2cm
Throughout of this paper, the space
 $\mathcal{L}(X_1, X_2)$ represents all the  bounded linear operators from the space $X_1$ to $X_2$. The space of bounded linear
operators from $X$ to itself is denoted by $\mathcal{L}(X)$. The spectrum, resolvent set and the domain of the
operator $A$ are denoted by $\sigma(A)$, $\rho(A)$ and $D(A)$, respectively. The point spectrum of $A$ is represented by $\sigma_p(A)$.
  The set of positive integers is denoted  by $\mathbb{Z}_{+}$.
We define inner product in $\R^N$ by
\begin{equation}\label{1.1.8}
	\langle a,b\rangle_{\R^N}=\sum_{i=1}^{N} a_i b_i,\ \ \forall\
a=(a_1,a_2,\cdots,a_N)^{\top},b=(b_1,b_2,\cdots,b_N)^{\top}\in\R^N.
\end{equation}
\section{Preliminaries for full state feedback design} \label{Preliminary}
In this section, we will give some preliminaries that are very important to the
state feedback design for system \dref{1.1}.
For any positive integer $i$, we can define, in terms of the function  $\phi_i(x)$  of (\ref{1.1.5}),   the operator $P_{\phi_i}$: $\mathbb{R}\backslash \sigma(A)  \rightarrow L^2(\Omega)$ by
\begin{equation}\label{2.1}
	P_{\phi_i}\theta=\zeta_{\phi_i}, \qquad \forall \ \theta\in \mathbb{R}\backslash \sigma(A)  , \qquad i=1,2,\cdots,N,
\end{equation}	
where $\zeta_{\phi_i}$ is the solution of following elliptic equation:
\begin{equation}\label{1.2.2}
	\begin{cases} \Delta\zeta_{\phi_i}(x)=\theta\zeta_{\phi_i}(x),\qquad x\in\Omega,\crr
		\disp \zeta_{\phi_i}(x)=0,\ \ x\in\Gamma_0 ;\qquad
		\frac{\partial\zeta_{\phi_i}(x)}{\partial \nu}=\phi_i(x) , \ \  x\in\Gamma_1.
	\end{cases}
\end{equation}


\begin{lemma}\label{lem2.1} Suppose that $\theta\in \mathbb{R}$ satisfies
	\begin{equation} \label{1.2.3}
		\theta\neq \lambda_j, \qquad  j\in \mathbb{Z}_+.
	\end{equation}
	Then the function $P_{\phi_i}$   defined  by  (\ref{2.1}) satisfies
	\begin{equation}\label{1.2.4} \langle P_{\phi_i}\theta,\phi_j\rangle_{L^2(\Omega)}=
\frac{1}{\theta-\lambda_j}\langle\phi_i,\phi_j\rangle_{L^2(\Gamma_1)}, \qquad i,j\in \mathbb{Z}_+.
	\end{equation}
\end{lemma}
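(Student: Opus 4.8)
The plan is to apply Green's second identity to the pair $\zeta_{\phi_i}:=P_{\phi_i}\theta$ and the eigenfunction $\phi_j$. First I would observe that the hypothesis $\theta\neq\lambda_j$ for all $j\in\mathbb{Z}_+$ means precisely that $\theta\notin\sigma(A)$: since $A$ is self-adjoint with compact resolvent, $\sigma(A)=\sigma_p(A)=\{\lambda_j\}_{j=1}^{\infty}$, so (\ref{1.2.2}) is uniquely solvable and $P_{\phi_i}$ is well defined on $\mathbb{R}\backslash\sigma(A)$. Moreover, since $\phi_i\in D(A)\subset H^2(\Omega)$, the Neumann datum $\phi_i|_{\Gamma_1}$ has enough trace regularity that elliptic regularity on the $C^2$-domain $\Omega$ yields $\zeta_{\phi_i}\in H^2(\Omega)$, which legitimizes integrating by parts twice.

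Next I would write Green's identity
\begin{equation*}
\int_{\Omega}\Big((\Delta\zeta_{\phi_i})\phi_j-\zeta_{\phi_i}(\Delta\phi_j)\Big)\,dx
=\int_{\Gamma}\Big(\frac{\partial\zeta_{\phi_i}}{\partial\nu}\phi_j-\zeta_{\phi_i}\frac{\partial\phi_j}{\partial\nu}\Big)\,d\Gamma
\end{equation*}
and evaluate each side with the boundary conditions. On $\Gamma_0$ both $\zeta_{\phi_i}$ and $\phi_j$ vanish; on $\Gamma_1$ we have $\partial_\nu\phi_j=0$ by (\ref{1.1.5}) and $\partial_\nu\zeta_{\phi_i}=\phi_i$ by (\ref{1.2.2}), so the right-hand side collapses to $\int_{\Gamma_1}\phi_i\phi_j\,d\Gamma=\langle\phi_i,\phi_j\rangle_{L^2(\Gamma_1)}$. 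On the left-hand side I would substitute $\Delta\zeta_{\phi_i}=\theta\zeta_{\phi_i}$ and $\Delta\phi_j=\lambda_j\phi_j$, which turns the integrand into $(\theta-\lambda_j)\zeta_{\phi_i}\phi_j$ and hence the left-hand side into $(\theta-\lambda_j)\langle\zeta_{\phi_i},\phi_j\rangle_{L^2(\Omega)}$.

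Equating the two expressions gives $(\theta-\lambda_j)\langle P_{\phi_i}\theta,\phi_j\rangle_{L^2(\Omega)}=\langle\phi_i,\phi_j\rangle_{L^2(\Gamma_1)}$, and dividing by the nonzero factor $\theta-\lambda_j$ yields (\ref{1.2.4}). The only genuinely delicate point is the regularity justification for the double integration by parts; if one prefers to avoid invoking $H^2$ elliptic regularity directly, the same identity follows from the variational form of (\ref{1.2.2}) tested against $\phi_j$, or from first identifying $P_{\phi_i}\theta=-(\tilde A-\theta)^{-1}B\phi_i$ through the Neumann map $\Upsilon$ and then using the definitions (\ref{1.1.6.1})–(\ref{1.1.7}) of $B$, $B^{*}$ together with the self-adjointness of $A$. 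I expect this regularity/well-posedness bookkeeping to be the main obstacle, the algebraic computation itself being routine.
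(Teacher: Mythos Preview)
Your argument is correct and is essentially the same as the paper's: both integrate by parts via Green's identity, use the boundary conditions of $\zeta_{\phi_i}$ and $\phi_j$ to reduce the boundary term to $\langle\phi_i,\phi_j\rangle_{L^2(\Gamma_1)}$, and then divide by $\theta-\lambda_j$. The paper simply writes the one-line computation (\ref{1.2.5}) without commenting on the regularity or well-posedness points you flagged.
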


\begin{proof}A straightforward computation shows that
	\begin{equation}\label{1.2.5}
		 \theta\int_{\Omega}\zeta_{\phi_i}\phi_j(x)dx
		= \int_{\Omega}\Delta\zeta_{\phi_i}\phi_j(x)dx
		= \int_{\Gamma_1}\phi_i(x)\phi_j(x)dx+\lambda_j\int_{\Omega}\zeta_{\phi_i}\phi_j(x)dx,
	\end{equation}	
	which, together with \dref{2.1}, yields \dref{1.2.4} easily.
\end{proof}

\vskip 0.5cm

Inspired by \cite{Feng2021}, the controller design is closely related to the
  following system:
\begin{equation}\label{1.2.7}
	\begin{cases} \disp z_{t}(x,t)=\Delta z(x,t)+\mu z(x,t)+\sum_{i=1}^N(P_{\phi_i}\theta)(x)u_i(t), \ \ (x,t)\in \Omega\times	(0,+\infty)\crr
	\disp	z(x,t)=0, \ \ (x,t)\in \Gamma_0\times(0,+\infty),\crr
		\disp \frac{\partial z (x,t)}{\partial \nu} = 0, \ \ (x,t)\in \Gamma_1\times(0,+\infty),
	\end{cases}
\end{equation}
where $P_{\phi_i}\theta$ is defined by \dref{2.1}, $\mu>0$, $N$ is a positive integer satisfies Assumption \ref{assumption 1.1} and $u_1,u_2,\cdots,u_N$ are new controls.
Since the sequence  $\{\phi_j(\cdot)\}_{j=1}^{\infty}$ under the
 Assumption \ref{assumption 1.1} forms an orthonormal basis for  $L^2(\Omega)$,  $P_{\phi_i}\theta$ and $z(\cdot,t)$ can be represented by
\begin{equation}\label{20211227735}
 P_{\phi_i}\theta=\sum_{k=1}^{\infty}f_{ki}\phi_k ,\ \ f_{ki}=\int_{\Omega}(P_{\phi_i}\theta)(x)\phi_k(x)dx
\end{equation}
and
\begin{equation} \label{20211227736}
z(\cdot,t)=\sum_{k=1}^{\infty}z_k(t)\phi_k(\cdot).
\end{equation}
 In view of \dref{1.2.7},  the function $z_k(t)$ in  \dref{20211227736}   satisfies
\begin{equation}\label{1.2.8}
    \begin{aligned}
	 \dot{z}_k (t)
	=&\int_{\Omega}z_t(x,t)\phi_k(x)dx  \crr
	=&\int_{\Omega}\left[\Delta z(x,t)+\mu z(x,t)+\sum_{i=1}^{N}(P_{\phi_i}\theta)(x)u_i(t) \right] \phi_k(x)dx \crr
	=&(\lambda_k+\mu)z_k(t)+\sum_{i=1}^N f_{ki}u_i(t),\ \ \ k=1,2,\cdots.
     \end{aligned}
\end{equation}
Since $\lambda_k+\mu<0$ provided $k> N$,
  $z_k(t)$ is stable for all $ k>N$. Consequently,  it is   sufficient to consider $z_k(t)$ for $k\leq N$, which satisfy the following finite-dimensional system:
\begin{equation}\label{1.2.9}
	\dot{Z}_{N}(t)=\Lambda_{N} Z_{N}(t)+F_{N}u(t),\ \ Z_{N}(t)=\begin{pmatrix}
 z_1(t)\\ z_2(t)\\ \vdots\\z_{N}(t)\end{pmatrix},\ \
 u(t)=\begin{pmatrix}
 u_1(t)\\ u_2(t)\\ \vdots\\u_{N}(t)\end{pmatrix},
\end{equation}
where    $\Lambda_{N}$ and  $F_{N}$ are defined by
\begin{equation}\label{2.10}
	\begin{cases} \Lambda_{N}={\rm diag}(\lambda_1+\mu,\cdots,\lambda_N+\mu),\crr
		F_{N}=
		\begin{pmatrix}
			f_{11} & f_{12}  & \cdots & f_{1N}  \\
			f_{21} & f_{22}  & \cdots & f_{2N}  \\
			\vdots & \vdots  & \cdots & \vdots  \\
			f_{N1} & f_{N2}  & \cdots & f_{NN}  \\
		\end{pmatrix} _{N\times N} ,\crr
\disp f_{ki}=\int_{\Omega}(P_{\phi_i}\theta)(x)\phi_k(x)dx,\qquad i,k=1,2,\cdots,N.
	\end{cases}
\end{equation}

\begin{lemma}\label{lem1.2.2}
In addition to  Assumption \ref{assumption 1.1},  assume that $\theta\in \mathbb{R}$   satisfies  (\ref{1.2.3}). Then, there exists a matrix $L_{N}=(l_{ij})_{N\times N} $ such that $\Lambda_{N}+F_{N}L_{N}$ is Hurwitz, where $\Lambda_{N}$ and $F_{N}$ are defined by (\ref{2.10}). Moreover, the operator $ A+\mu+\sum_{i=1}^{N}(P_{\phi_i}\theta)K_i$ generates an exponentially stable $C_0$-semigroup on $L^2(\Omega)$, where $P_{\phi_i}\theta$  is given by \dref{2.1} and $K_i$ is given by
	
\begin{equation} \label{2.12}
	K_i: g\rightarrow \int_{\Omega}g(x)\sum_{k=1}^{N} l_{ik}\phi_k(x)dx, \ \ \forall \ g\in L^2(\Omega) ,\ \ \forall \ i=1, 2 \cdots, N.
	\end{equation}
\end{lemma}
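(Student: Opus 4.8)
The plan splits the statement into a finite-dimensional part (the existence of $L_N$, which is pure pole placement) and an infinite-dimensional part (the stability of the perturbed operator, built on top of the first). For the first assertion, by the pole-assignment theorem it suffices to show that the pair $(\Lambda_N,F_N)$ is controllable; note that by \dref{1.1.4}--\dref{1.1.4.1} every eigenvalue $\lambda_k+\mu$, $k\le N$, lies in the closed right half-plane, so controllability is in fact also necessary for a Hurwitz feedback to exist. Using Lemma~\ref{lem2.1} one has $f_{ki}=(\theta-\lambda_k)^{-1}\langle\phi_i,\phi_k\rangle_{L^2(\Gamma_1)}$, hence $F_N=D_NG_N$ where $D_N=\mathrm{diag}\bigl((\theta-\lambda_1)^{-1},\ldots,(\theta-\lambda_N)^{-1}\bigr)$ is invertible by \dref{1.2.3} and $G_N=\bigl(\langle\phi_i,\phi_k\rangle_{L^2(\Gamma_1)}\bigr)_{1\le k,i\le N}$ is the Gram matrix of the boundary traces $\phi_1|_{\Gamma_1},\ldots,\phi_N|_{\Gamma_1}$. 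Since $\Lambda_N$ is diagonal, I would apply the Hautus test: for each eigenvalue of $\Lambda_N$, with associated index block $J=\{k:\lambda_k+\mu \text{ equals that value}\}$, controllability is equivalent to the rows of $F_N$ with index in $J$ being linearly independent, which, because $D_N$ is diagonal and invertible, is equivalent to invertibility of the principal submatrix $\bigl(\langle\phi_i,\phi_k\rangle_{L^2(\Gamma_1)}\bigr)_{k,i\in J}$, i.e.\ to linear independence in $L^2(\Gamma_1)$ of $\{\phi_k|_{\Gamma_1}:k\in J\}$.

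The heart of the argument, and the step I expect to be the main obstacle since it is exactly what replaces the algebraic-simplicity hypothesis of \cite{Feng2021}, is this last independence claim. If $\sum_{k\in J}c_k\phi_k|_{\Gamma_1}=0$, set $\psi=\sum_{k\in J}c_k\phi_k$; all indices in $J$ share a single eigenvalue $\lambda$, so by \dref{1.1.5} $\psi$ satisfies $\Delta\psi=\lambda\psi$ in $\Omega$ with $\psi|_{\Gamma_0}=0$ and $\partial\psi/\partial\nu|_{\Gamma_1}=0$, together with the extra relation $\psi|_{\Gamma_1}=0$; thus $\psi$ has vanishing Cauchy data on the boundary portion $\Gamma_1$ (which contains a nonempty relatively open subset of $\Gamma$). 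Extending $\psi$ by zero across $\Gamma_1$ yields a solution of $\Delta\psi=\lambda\psi$ on a larger connected open set that vanishes on a nonempty open subset, so Holmgren's uniqueness theorem (the coefficients of $\Delta-\lambda$ are analytic) together with the weak unique continuation property forces $\psi\equiv0$ in $\Omega$; orthonormality of $\{\phi_k\}$ then gives $c_k=0$ for all $k\in J$. Hence $(\Lambda_N,F_N)$ is controllable, and the pole-assignment theorem produces $L_N$ with $\Lambda_N+F_NL_N$ Hurwitz.

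For the second assertion write $\mathcal{A}=A+\mu+\sum_{i=1}^N(P_{\phi_i}\theta)K_i$. Each $(P_{\phi_i}\theta)K_i$ is a bounded rank-one operator on $L^2(\Omega)$, so $\mathcal{A}$ is a bounded perturbation of the generator $A+\mu$ of an analytic semigroup and thus itself generates a $C_0$-semigroup; only exponential stability needs proof. Expanding in the orthonormal basis $\{\phi_k\}$ and using $K_ig=\sum_{k=1}^N l_{ik}\langle g,\phi_k\rangle_{L^2(\Omega)}$ together with \dref{20211227735}, the coefficients $z_k(t)=\langle e^{t\mathcal{A}}z_0,\phi_k\rangle_{L^2(\Omega)}$ satisfy $\dot z_k=(\lambda_k+\mu)z_k+\sum_{j=1}^N(F_\infty L_N)_{kj}z_j$, where $F_\infty$ is the matrix with columns $(f_{mi})_{m\ge1}$; these columns are the $\ell^2$-coefficient sequences of $P_{\phi_i}\theta$, so $\sum_m|f_{mi}|^2=\|P_{\phi_i}\theta\|_{L^2(\Omega)}^2<\infty$. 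The first $N$ of these equations form the closed system $\dot Z_N=(\Lambda_N+F_NL_N)Z_N$, which decays exponentially by the first part; equivalently, $\overline{\mathrm{span}}\{\phi_k:k>N\}$ is $\mathcal{A}$-invariant with $\mathcal{A}$ acting there as $A+\mu$, which is exponentially stable because $\lambda_k+\mu\le\lambda_{N+1}+\mu<0$ for $k>N$. For the modes $k>N$ I would then use Duhamel's formula in $\ell^2(\{k>N\})$: the diagonal semigroup $\mathrm{diag}(e^{(\lambda_k+\mu)t})_{k>N}$ has norm $\le e^{(\lambda_{N+1}+\mu)t}$, while the forcing $\bigl(\sum_{j\le N}(F_\infty L_N)_{kj}z_j(t)\bigr)_{k>N}$ has $\ell^2$-norm $\le C|Z_N(t)|\le C'e^{-\omega t}\|z_0\|$ thanks to the $\ell^2$-summability of the columns of $F_\infty$; a routine convolution estimate then gives $\|(z_k(t))_{k>N}\|_{\ell^2}\le C''e^{-\delta t}\|z_0\|$, and combining with the bound on $Z_N$ yields $\|e^{t\mathcal{A}}z_0\|_{L^2(\Omega)}\le Me^{-\delta t}\|z_0\|_{L^2(\Omega)}$ for some $\delta>0$, which is the asserted exponential stability.
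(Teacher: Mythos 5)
Your proof is correct. The first half follows essentially the paper's route: the paper likewise reduces the existence of $L_N$ to controllability of $(\Lambda_N,F_N)$, verified by the Hautus test adapted to the repeated-eigenvalue block structure (Lemma \ref{lem6.x}) together with invertibility of the Gram matrix of the boundary traces $\phi_k|_{\Gamma_1}$ within each eigenspace (Lemma \ref{lem6.1.2}); that Gram-matrix lemma is proved by exactly your overdetermined-problem argument ($\Delta\psi=\lambda\psi$ with vanishing Cauchy data on $\Gamma_1$ forces $\psi\equiv 0$), except that the paper cites a unique-continuation result from \cite{Feng2015} where you invoke Holmgren plus extension by zero. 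Where you genuinely diverge is the second half. The paper observes that $A+\mu+\sum_{i=1}^{N}(P_{\phi_i}\theta)K_i$ generates an analytic semigroup and is the inverse of a compact operator, invokes the spectrum-determined growth condition \cite[Theorem 4.3, p.118]{Pazy1983}, and then locates the point spectrum by a case split on the eigenvector expansion: if $(g_1,\dots,g_N)\neq 0$ then $\lambda\in\sigma(\Lambda_N+F_NL_N)$, and if $(g_1,\dots,g_N)=0$ then $\lambda=\lambda_{j_0}+\mu<0$ for some $j_0>N$. You instead exploit the same triangular mode structure dynamically: the first $N$ modes close up as $\dot Z_N=(\Lambda_N+F_NL_N)Z_N$, and the tail is controlled by a Duhamel estimate using the $\ell^2$-summability of the columns of $F_\infty$ (i.e. $P_{\phi_i}\theta\in L^2(\Omega)$). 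Both arguments are valid; yours is more elementary and yields an explicit decay rate $\delta$ close to $\min\{\omega,\,-(\lambda_{N+1}+\mu)\}$, while the paper's is shorter once the analyticity and compact-resolvent facts are in hand and avoids the convolution estimate entirely.
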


\begin{proof}
Since the sequence  $\{\phi_j(\cdot)\}_{j=1}^{\infty}$ under the
 Assumption \ref{assumption 1.1} are  linearly
independent on  $L^2(\Omega)$,
it follows from  Lemmas
\ref{lem6.1.2} and	\ref{lem6.x} in Appendix   and \dref{1.2.4} that  the pair  $(\Lambda_{N}, F_{N})$ is controllable.
 Hence, there exists a matrix $L_{N}$ such  that $\Lambda_{N}+F_{N}L_{N}$ is Hurwitz. Since  $A+\mu$ generates an analytic semigroup $e^{(A+\mu)t}$ on $L^2(\Omega)$ and $ \sum_{i=1}^{N} P_{\phi_i}\theta K_i\in \mathcal{L}(L^2(\Omega))$, it follows from \cite[Corollary 2.2, p.81]{Pazy1983}  that  $  A+\mu+\sum_{i=1}^{N} P_{\phi_i}\theta K_i$   generates an analytic semigroup on $L^2(\Omega)$ as well.
By a straightforward computation,  $A+\mu+\sum_{i=1}^{N} P_{\phi_i}\theta K_i$
is the inverse of a compact operator. Thanks to \cite[Theorem 4.3, p.118]{Pazy1983}, the proof will be
accomplished if we can show that the point spectrum of
$ A+\mu+\sum_{i=1}^{N} P_{\phi_i}\theta K_i$ satisfies
\begin{equation}  \label{2.13}
	\sigma _p(A+\mu+\sum_{i=1}^{N} P_{\phi_i}\theta K_i)\subset\{s\ |\ {\rm Re}\, s<0\}.
\end{equation}

Actually, for all $ \lambda\in \sigma_p (A+\mu+\sum_{i=1}^{N} P_{\phi_i}\theta K_i)$, we  consider following characteristic equation $(A+\mu+\sum_{i=1}^{N} P_{\phi_i}\theta K_i)g=\lambda g$
with $g\neq 0$.  Since
the sequence $\{\phi_j(\cdot)\}_{j=1}^{\infty}$  forms an orthonormal basis for  $L^2(\Omega)$,  we can suppose that
\begin{equation}  \label{FH2022151123}
	0\neq g=\sum_{k=1}^{\infty}g_k\phi_k,\ \ g_k=\langle g,\phi_k\rangle_{L^2(\Omega)},\quad k=1,2,\cdots.
\end{equation}
As a result, the characteristic equation becomes
\begin{equation}\label{FH20222151137}
\begin{aligned}
	 \sum_{k=1}^{\infty}\lambda g_k\phi_k =&\sum_{k=1}^{\infty}g_k(A+\mu)\phi_k
         +\sum_{i =1 }^{N}\sum_{k=1}^{\infty}(P_{\phi_i}\theta) g_k K_i\phi_k  \crr
	=&\sum_{k=1}^{\infty}g_k(\lambda_k+\mu)\phi_k+ \sum_{i,k =1 }^{N}(P_{\phi_i}\theta) l_{ik}g_k.
\end{aligned}
\end{equation}

When $(g_1,g_2,\cdots,g_N)\neq0$,
we take the inner product with $\phi_j$, $ j = 1,2,\cdots,N$ on equation
(\ref{FH20222151137}) to obtain
\begin{equation}\label{2.15}	
	\lambda g_j=g_j(\lambda_j+\mu)+\sum_{i,k=1}^{N}f_{ji} l_{ik}g_k,
\end{equation}
which, together with (\ref{2.10}), leads to
\begin{equation} \label{2.15.1}(\lambda-\Lambda_N-F_NL_N)(g_1,g_2,\cdots,g_N)^{\top}= 0.\end{equation}
Since $(g_1,g_2,\cdots,g_N)^{\top}\neq 0$, \dref{2.15.1} implies that
\begin{equation}\label{2.16}
	{\rm Det}(\lambda-\Lambda_N-F_NL_N)= 0.
\end{equation}
Hence, $\lambda\in \sigma(\Lambda_N+F_NL_N)\subset \{s\ |\ {\rm Re} \, s<0\}$ due to   that  $\Lambda_N+F_NL_N$ is Hurwitz.

\vskip 0.3cm

When $(g_1,g_2,\cdots,g_N)=0$,  there exists a   $j_0>N$  such that $\int_{\Omega}g(x)\phi_{j_0}(x)dx\neq 0$ due to $g\neq 0$, and at the same time, (\ref{FH20222151137}) is reduced to
\begin{equation}\label{meng202201051328}
 \sum_{k=1}^{\infty}\lambda g_k\phi_k=\sum_{k=1}^{\infty}g_k(\lambda_k+\mu)\phi_k.
 \end{equation}
Take the inner product
with $\phi_{j_0}$ on equation (\ref{meng202201051328}) to get
\begin{eqnarray}\label{2.17}
 (\lambda_{j_0}+\mu)g_{j_0}=\lambda g_{j_0},
\end{eqnarray}
 which, together with (\ref{1.1.4.1}), implies that $\lambda= \lambda_{j_0}+\mu<0$. So  ${\rm Re} \, \lambda<0$. Therefore, we can get  \dref{2.13}.
\end{proof}

\section{State feedback} \label{shiftsemigroup}

	This section is devoted to the full state feedback design for  system (\ref{1.1}).
To this end, we first define, in terms of the eigenfunctions $\phi_1,\phi_2,\cdots,\phi_N$ that are given by \dref{1.1.5}, the operator  $B_{v}\in \mathcal{L}\big(\R^N , L^2(\Gamma_1) \big)$ by following equation
\begin{equation} \label{Fh2022151515}
B_{v} c =
\sum_{j=1}^{N}c_j\phi_j(x)
 ,\quad x\in \Gamma_1,\ \
 \ \ \forall\  c=\begin{pmatrix}
         c_1,
         c_2 ,
       \cdots,
        c_N
     \end{pmatrix}^{\top}\in \mathbb{R}^N .
\end{equation}
 Inspired by \cite{Feng2008}, we consider the following dynamics feedback
\begin{equation}
	\begin{cases}
		\label{3.1}
		\disp u(x,t)= v(x,t) ,\qquad x\in \Gamma_1, \crr
		\disp {v}_t(\cdot,t)=-\alpha I v (\cdot,t)+B_{v}u_v(t) \quad \mbox{in} \quad L^2(\Gamma_1) ,
	\end{cases}
\end{equation}
where
 $\alpha>0$ is a tuning parameter,  $I $ is the  identity operator  in $ L^2(\Gamma_1) $ and
 \begin{equation}\label{2022151709}
  	u_v(t)=\begin{pmatrix}
         u_1(t) ,
         u_2 (t) ,
       \cdots,
        u_N(t)
     \end{pmatrix}^{\top} \in\R^N,	
\end{equation}
  is  a new    control to be designed later.
 Under the controller (\ref{3.1}), the control plant
 (\ref{1.1})  becomes
\begin{equation}
	\begin{cases}
		\label{2.1.5}
		\disp w_t(\cdot,t)=(\tilde{A}+\mu)w(\cdot,t)+	B v(\cdot,t) \quad \mbox{in}\  \ H^{-1}_{\Gamma_0}(\Omega), \crr
		\disp v_{ t}(\cdot,t)=-\alpha Iv (\cdot, t)+ B_{v}u_{v}(t) \quad \mbox{in}\  \ L^2(\Gamma_1) ,
	\end{cases}	
\end{equation}
where $\tilde{A}  $ is the extension of $A$ given by \dref{2020981918} and $B\in \mathcal{L}\big(L^2(\Gamma_1),H^{-1}(\Omega)\big)$ is given  by
\dref{1.1.6.1}.

Since (\ref{2.1.5})  is a cascade system, the ``$v$-system" can be regarded as the actuator dynamics of the control plant ``$w$-system". Therefore, we can use the actuator dynamics compensation method proposed in \cite{Feng2008} to stabilize the system \dref{2.1.5}.
Actually, the controller    \dref{2022151709} can be designed as
\begin{equation}\label{FH2.1.6}
u_{i }( t) =
    -[      K_iw(\cdot, t) +  K_iS v (\cdot,t)] ,\quad i=1,2,\cdots,N,
\end{equation}
where $K_i$ are given by  (\ref{2.12}),
$S\in \mathcal{L}\big(L^2(\Gamma_1),L^2(\Omega)\big)$ solves the Sylvester equation
\begin{equation}\label{2.1.10}
	(\tilde{A}+\mu)S+\alpha S=B.
\end{equation}
Combining  (\ref{2.12}) and  \dref{2.1.12},      the controller (\ref{FH2.1.6}) turns to be
\begin{equation}\label{mys2.1.17}
	\disp u_{i}( t) =
	 -  \int_{\Omega}\big[w(x,t)-\varphi_v(x,t)\big] \left[\sum_{k=1}^N l_{ik}\phi_k(x)\right]dx  ,
\end{equation}
where $\varphi_v $ satisfies following equation
\begin{equation}
	\begin{cases} \label{2.1.18}
		\Delta \varphi_v(\cdot,t)=(-\alpha-\mu)\varphi_v(\cdot,t) \qquad {\mbox{in}} \quad\Omega, \crr
	\disp	\varphi_v(x,t) =0 , \qquad  x\in\Gamma_0, \crr
		\disp \frac{\partial \varphi_v(x,t) }{\partial \nu}=v(x,t), \qquad x\in\Gamma_1.
	\end{cases}
\end{equation}
Combining \dref{1.1}, \dref{Fh2022151515}, \dref{3.1}, \dref{2022151709} and \dref{mys2.1.17}, we obtain the closed-loop  system \begin{equation}
	\begin{cases}
		\label{2.1.20}
		w_t(x,t)=\Delta w(x,t)+	\mu w(x,t),  \qquad (x,t)\in \Omega\times(0,+\infty) ,\crr
		w(x,t)=0,\qquad (x,t)\in \Gamma_0\times(0,+\infty),\crr
		\disp \frac{\partial w(x,t)}{\partial \nu} = v(x,t), \qquad (x,t)\in \Gamma_1\times(0,+\infty),\crr
		\disp v_{t}(x,t)=-\alpha v(x, t)-\sum_{j=1}^{N}\phi_j(x)
		\disp  \int_{\Omega}\big[w(x,t)
		-\varphi_v(x,t)\big]\left[\sum_{i=1}^N l_{ji}\phi_i(x)\right]dx, \crr
		\hspace{5cm} (x,t)\in \Gamma_1\times(0,+\infty),\cr
		\Delta \varphi_v(\cdot,t)=(-\alpha-\mu)\varphi_v(\cdot,t)  \quad \hbox{in} \ \ \Omega, \crr
		\varphi_v(x,t)=0 ,\qquad  x\in\Gamma_0, \crr
		\disp \frac{\partial \varphi_v(x,t)}{\partial \nu}=v(x,t), \qquad x\in\Gamma_1.
	\end{cases}	
\end{equation}

\vskip 0.5cm
\begin{lemma}\label{lem.2}  Let the operators  $\tilde{A}$ and $B$ be  given by (\ref{2020981918}) and (\ref{1.1.6.1}), respectively.
Suppose that   $\alpha$ satisfies
	\begin{equation} \label{2.1.11}
	\alpha+\mu \in\rho(-A).
	\end{equation}
	Then the solution of Sylvester equation (\ref{2.1.10}) satisfies 		
	\begin{equation} \label{2.1.12}
	Sg=-\varphi_g\in L^2(\Omega), \qquad  \forall \ g\in L^2(\Gamma_1),
	\end{equation}	
	where $\varphi_g$ satisfies  following equation
	\begin{equation}
		\begin{cases} \label{2.1.13}
			\Delta \varphi_g(x)=(-\alpha-\mu)\varphi_g(x), \quad x\in\Omega ,\crr
		\disp	\varphi_g(x)=0,\ x\in\Gamma_0;\ \
			\frac{\partial \varphi_g(x)}{\partial \nu}=g(x), \ x\in \Gamma_1.
		\end{cases}
	\end{equation}
 	Moreover,  for any $c=(c_1,c_2,\cdots,c_N)^{\top}\in \mathbb{R}^N$
	\begin{equation} \label{2.1.13.1}
	SB_{v} c=-\sum_{i=1}^N  c_i P_{\phi_i}\theta,  \qquad  \theta=-\alpha-\mu,
	\end{equation}	
	where $B_v$ is  given by \dref{Fh2022151515}
and
 $P_{\phi_i}\theta$ are given by (\ref{2.1}).
\end{lemma}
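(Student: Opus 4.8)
The plan is to verify the three claims of the lemma in order, working directly from the definitions of $\tilde A$, $B$, $B_v$ and the Neumann-type elliptic problems, and using the Sylvester equation \dref{2.1.10} as an identity in $H^{-1}_{\Gamma_0}(\Omega)$. First I would establish \dref{2.1.12}. Fix $g\in L^2(\Gamma_1)$ and let $\varphi_g$ solve \dref{2.1.13}; this is well posed precisely because $\alpha+\mu\in\rho(-A)$, i.e.\ $-\alpha-\mu$ is not an eigenvalue of $\Delta$ under the mixed boundary conditions, so the elliptic operator $\Delta-(-\alpha-\mu)$ with the homogeneous Dirichlet/Neumann data is invertible and $\varphi_g\in H^{3/2}(\Omega)\subset L^2(\Omega)$. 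I then want to show $Sg=-\varphi_g$ by checking that $-\varphi_g$ satisfies the Sylvester relation $(\tilde A+\mu)(-\varphi_g)+\alpha(-\varphi_g)=Bg$, and invoke uniqueness of the solution of \dref{2.1.10} (which again holds because $\alpha+\mu\in\rho(-A)$ makes $\tilde A+\mu+\alpha$ boundedly invertible). Concretely, $Bg=-\tilde A\Upsilon g$ by \dref{1.1.6.1}, so I must show $(\tilde A+\mu+\alpha)\varphi_g=\tilde A\Upsilon g$ in $H^{-1}_{\Gamma_0}(\Omega)$.

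The computation for this is the natural Green's-formula pairing: for any test function $z\in H^1_{\Gamma_0}(\Omega)$, use the definition \dref{2020981918} of $\tilde A$ together with integration by parts. Writing $\psi=\Upsilon g$ (so $\Delta\psi=0$, $\psi|_{\Gamma_0}=0$, $\partial_\nu\psi|_{\Gamma_1}=g$) and using $\partial_\nu\varphi_g|_{\Gamma_1}=g$ as well, the boundary terms on $\Gamma_1$ coming from $\varphi_g$ and from $\psi$ cancel, and $\Delta\varphi_g=(-\alpha-\mu)\varphi_g$ converts the interior term into $(-\alpha-\mu)\langle\varphi_g,z\rangle_{L^2}$, yielding exactly $\langle(\tilde A+\mu+\alpha)\varphi_g,z\rangle=\langle\tilde A\psi,z\rangle$ for all such $z$. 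Hence $-\varphi_g$ solves \dref{2.1.10}, and by uniqueness $Sg=-\varphi_g$, which is \dref{2.1.12}. I expect this Green's-identity bookkeeping — getting the boundary contributions on $\Gamma_0$ and $\Gamma_1$ to match up correctly in the duality pairing — to be the main technical point, though it is routine once the pairings are written out carefully.

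Finally, for \dref{2.1.13.1} I would simply apply \dref{2.1.12} with $g=B_v c$. By the definition \dref{Fh2022151515}, $B_v c$ is the function $\sum_{j=1}^N c_j\phi_j$ on $\Gamma_1$, so $\varphi_{B_v c}$ solves \dref{2.1.13} with Neumann data $\sum_{j=1}^N c_j\phi_j$ on $\Gamma_1$. By linearity of the elliptic problem, $\varphi_{B_v c}=\sum_{i=1}^N c_i\,\zeta_i$ where each $\zeta_i$ solves $\Delta\zeta_i=(-\alpha-\mu)\zeta_i$ in $\Omega$, $\zeta_i|_{\Gamma_0}=0$, $\partial_\nu\zeta_i|_{\Gamma_1}=\phi_i$. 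Comparing with the definition \dref{2.1} of $P_{\phi_i}\theta$ via \dref{1.2.2} with $\theta=-\alpha-\mu$, and noting that hypothesis \dref{2.1.11} guarantees $\theta\notin\sigma(A)$ so that $P_{\phi_i}\theta$ is defined, we get $\zeta_i=P_{\phi_i}\theta$. Therefore $SB_v c=-\varphi_{B_v c}=-\sum_{i=1}^N c_i P_{\phi_i}\theta$, which is exactly \dref{2.1.13.1}. This last step is immediate once \dref{2.1.12} is in hand.
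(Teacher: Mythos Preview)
Your proposal is correct and follows essentially the same approach as the paper: both verify that $(\tilde A+\mu+\alpha)\varphi_g=-Bg$ and then invoke invertibility of $\tilde A+\mu+\alpha$ to conclude $Sg=-\varphi_g$, and both obtain \dref{2.1.13.1} by specializing to $g=B_vc$ and using linearity together with the definition of $P_{\phi_i}\theta$. The only cosmetic difference is that the paper carries out the key identity via the algebraic splitting $\tilde A\varphi_g=\tilde A(\varphi_g-\Upsilon g)+\tilde A\Upsilon g$ (using $\varphi_g-\Upsilon g\in D(A)$), whereas you do the equivalent computation directly with Green's formula in the duality pairing.
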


\vskip 0.5cm

\begin{proof}		
By \dref{2.1.10} and  (\ref{2.1.11}), we  get
\begin{equation} \label{2.1.14}
S=(\alpha+\mu+\tilde{A})^{-1}B.
\end{equation}
It follows from \dref {1.1.51},  \dref{1.1.6.1} and \dref{2.1.13}  that
\begin{equation}\label{2.1.15}
\begin{aligned}
	 \disp (\alpha+\mu+\tilde{A})\varphi_g
	= (\alpha+\mu+\tilde{A})\varphi_g-\tilde{A}\Upsilon g+\tilde{A}\Upsilon  g \crr
	\disp =  (\alpha+\mu)\varphi_g+\tilde{A}(\varphi_g-\Upsilon  g)+\tilde{A}\Upsilon  g
	=  \tilde{A}\Upsilon  g
	= -Bg,
\end{aligned}
\end{equation}
which together with (\ref{2.1.14}), leads to $Sg=-\varphi_g$. Consequently, we combine  (\ref{2.1}) and (\ref{Fh2022151515}) to get \dref{2.1.13.1}.
\end{proof}

%

\vskip0.2cm
\begin{theorem}\label{thm2.2}
	In addition to Assumption \ref{assumption 1.1}, suppose that  $\alpha>0$ satisfies
	\begin{equation}
	\alpha+\mu+\lambda_j\neq 0, \qquad j\in \mathbb{Z}_+.
	\end{equation}
	Then, there exists a matrix $L_N=(l_{ij})_{N\times N}  $ such that $\Lambda_N+F_NL_N $ is Hurwitz, where $\Lambda_N$ and $F_N$ are defined by (\ref{2.10}). Moreover, for any initial value $\big(w(\cdot,0),v(\cdot,0)\big)^{\top}\in L^2(\Omega)\times L^2(\Gamma_1)$, system (\ref{2.1.20}) admits a unique solution $(w,v)^{\top} \in C\big([0,\infty);L^2(\Omega)\times L^2(\Gamma_1)\big)$ that decays to zero exponentially in $L^2(\Omega)\times L^2(\Gamma_1)$ as $t\rightarrow +\infty$. 
\end{theorem}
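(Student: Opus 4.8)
The plan is to transform the closed-loop system \dref{2.1.20} into a triangular cascade by means of the bounded, boundedly invertible change of variables $\tilde w=w+Sv$, where $S$ solves the Sylvester equation \dref{2.1.10}, and then to read off both well-posedness and exponential stability from the cascade structure. First note that the standing hypothesis $\alpha+\mu+\lambda_j\neq0$ for all $j\in\Z_+$ is precisely condition \dref{1.2.3} with $\theta=-\alpha-\mu$, and it also means $\alpha+\mu\in\rho(-A)$; hence both Lemma \ref{lem1.2.2} and Lemma \ref{lem.2} are available. In particular there is a matrix $L_N=(l_{ij})_{N\times N}$ making $\Lambda_N+F_NL_N$ Hurwitz (this is the first assertion of the theorem), and, with the $K_i$ built from $L_N$ via \dref{2.12}, the operator
\[
\mathcal{A}_1:=A+\mu+\sum_{i=1}^N (P_{\phi_i}\theta)K_i,\qquad \theta=-\alpha-\mu,
\]
generates an exponentially stable $C_0$-semigroup on $L^2(\Omega)$; moreover $Sv=-\varphi_v$ (so the $\varphi_v$ appearing in \dref{2.1.20} is well defined and $w-\varphi_v=w+Sv=\tilde w$, whence the control \dref{mys2.1.17} is simply $u_i(t)=-K_i\tilde w(\cdot,t)$) and $SB_vc=-\sum_{i=1}^N c_iP_{\phi_i}\theta$.

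Next I would write the dynamics abstractly on $\H:=L^2(\Omega)\times L^2(\Gamma_1)$. Differentiating $\tilde w=w+Sv$ along \dref{2.1.5} with the control \dref{FH2.1.6}, and using the Sylvester identity $Bv=(\tilde A+\mu)Sv+\alpha Sv$, the unbounded terms cancel and one is left with the cascade
\[
\frac{d}{dt}\begin{pmatrix}\tilde w\\ v\end{pmatrix}=\mathcal{C}\begin{pmatrix}\tilde w\\ v\end{pmatrix},\qquad
\mathcal{C}:=\begin{pmatrix}\mathcal{A}_1 & 0\\[1mm] -B_vK & -\alpha I\end{pmatrix},
\]
where $K\in\mathcal{L}(L^2(\Omega),\R^N)$ has components $K_i$ and $B_v\in\mathcal{L}(\R^N,L^2(\Gamma_1))$ is given by \dref{Fh2022151515}. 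Since $\mathcal{A}_1$ and $-\alpha I$ each generate a $C_0$-semigroup and the coupling $-B_vK$ is bounded from $L^2(\Omega)$ to $L^2(\Gamma_1)$, the triangular operator $\mathcal{C}$ generates the $C_0$-semigroup
\[
e^{\mathcal{C}t}\begin{pmatrix}\tilde w_0\\ v_0\end{pmatrix}=\begin{pmatrix}e^{\mathcal{A}_1t}\tilde w_0\\[1mm] e^{-\alpha t}v_0-\dis\int_0^t e^{-\alpha(t-s)}B_vKe^{\mathcal{A}_1s}\tilde w_0\,ds\end{pmatrix}
\]
on $\H$. As the map $\mathbb{T}\colon(w,v)^\top\mapsto(w+Sv,v)^\top$ is an isomorphism of $\H$, the closed-loop operator associated with \dref{2.1.20} equals $\mathbb{T}^{-1}\mathcal{C}\mathbb{T}$ and thus generates the $C_0$-semigroup $\mathbb{T}^{-1}e^{\mathcal{C}t}\mathbb{T}$; this yields, for every initial datum in $\H$, a unique solution $(w,v)^\top\in C([0,\infty);\H)$.

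For the decay, Lemma \ref{lem1.2.2} furnishes constants $M,\omega_1>0$ with $\|e^{\mathcal{A}_1t}\|\le Me^{-\omega_1t}$, while $\|e^{-\alpha t}I\|=e^{-\alpha t}$ with $\alpha>0$; estimating the convolution term (which contributes at worst an extra factor $t$ in the resonant case $\alpha=\omega_1$) gives $\|e^{\mathcal{C}t}\|_{\mathcal{L}(\H)}\le \tilde M e^{-\omega t}$ for some $\tilde M>0$ and any $0<\omega<\min\{\alpha,\omega_1\}$. Hence $(\tilde w(\cdot,t),v(\cdot,t))^\top\to0$ exponentially in $\H$, and since $\mathbb{T}^{-1}$ is bounded, so does $(w(\cdot,t),v(\cdot,t))^\top=\mathbb{T}^{-1}(\tilde w(\cdot,t),v(\cdot,t))^\top$, which is the assertion of the theorem.

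The technical heart is the passage from the PDE system \dref{2.1.20} to the abstract cascade $\mathcal{C}$: one must check that the change of variables $\tilde w=w+Sv$ genuinely removes the inhomogeneous Neumann data (so that, on the domain of the generator, $\tilde w\in D(A)$ and $\tilde A\tilde w=A\tilde w$), that the unbounded operators $\tilde A$ and $B$ cancel exactly via \dref{2.1.10} and Lemma \ref{lem.2}, and that $\mathbb{T}^{-1}\mathcal{C}\mathbb{T}$ really coincides with the closed-loop operator of \dref{2.1.20} on the appropriate domain. Once this identification is made, the facts that a triangular array of generators with bounded coupling is again a generator and that the convolution of two exponentially decaying semigroups decays exponentially are both routine.
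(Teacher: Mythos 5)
Your proposal is correct and follows essentially the same route as the paper: the transformation $(w,v)^{\top}\mapsto(w+Sv,v)^{\top}$ is exactly the paper's similarity $\mathbb{S}$ in (\ref{2.1.23}), the resulting triangular operator with diagonal blocks $A+\mu-SB_vK=A+\mu+\sum_{i=1}^{N}(P_{\phi_i}\theta)K_i$ and $-\alpha I$ is the paper's $\mathcal{A}_{\mathbb{S}}$, and the stability of the top block is drawn from Lemma \ref{lem1.2.2} in both arguments. The only cosmetic difference is that you justify exponential stability of the triangular cascade by writing out the perturbed semigroup and estimating the convolution, whereas the paper cites \cite[Lemma 5.1]{Weiss1997TAC}.
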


\begin{proof}
 We combine the (\ref{2022151709}) and \dref{FH2.1.6} to get
\begin{equation}\label{mys01061139}
u_{v}( t)=-Kw(\cdot,t)-K S v(\cdot,t),
\end{equation}
 where $K\in \mathcal{L}(L^2(\Omega),\R^N)$ is defined by
\begin{equation}\label{mys01061131}
Kg=(K_1 g,K_2 g,\cdots,K_N g)^{\top},\qquad \forall g\in L^2(\Omega).
\end{equation}
 In view of the operators given by  \dref{1.2}, \dref{1.1.6.1}, \dref{Fh2022151515},  \dref{2.1.12} and \dref{mys01061131}, the closed-loop system \dref{2.1.20} can be written as the abstract form:
\begin{equation}\label{2.1.21}
\frac {d}{dt}\big(w(\cdot,t),v(\cdot,t)\big)^{\top}=\mathcal{A}\big(w(\cdot,t),v(\cdot,t)\big)^{\top},
\end{equation}
where the operator $\mathcal{A}$: $D(\mathcal{A})\subset L^2(\Omega)\times L^2(\Gamma_1) \rightarrow  L^2(\Omega)\times L^2(\Gamma_1)$ is defined by

\begin{equation} \label{2.1.22}
\mathcal{A}=
\begin{pmatrix}
	A+\mu  & B  \\
	-B_vK  & -B_vKS-\alpha I  \\
\end{pmatrix},\ \ D(\A)=D(A)\times L^2(\Gamma_1).
\end{equation}
It is sufficient to prove
that the operator $\mathcal{A}$ generates an exponentially stable $C_0$-semigroup on $L^2(\Omega)\times L^2(\Gamma_1)$.

Inspired by  \cite{Feng2008}, we introduce  following transformation
\begin{equation} \label{2.1.23}
\mathbb{S}(f,g)^{\top}=(f+S g,g)^{\top}, \qquad (f,g)^{\top}\in L^2(\Omega)\times L^2(\Gamma_1),
\end{equation}
where $S\in\mathcal{L} (L^2(\Gamma_1), L^2(\Omega))$ solves the Sylvester equation 	(\ref{2.1.10}).
By a simple computation,  $\mathbb{S}\in L^2(\Omega)\times L^2(\Gamma_1)$ is invertible and its inverse is
\begin{equation}
\mathbb{S}^{-1}(f,g)^{\top}=(f-S g,g)^{\top}, \qquad (f,g)^{\top}\in L^2(\Omega)\times L^2(\Gamma_1).
\end{equation}
Moreover,
\begin{equation}\label{2.xx}
\mathbb{S}\mathcal{A}\mathbb{S}^{-1}=\mathcal{A}_{\mathbb{S}}, \qquad D(\mathcal{A}_{\mathbb{S}})=\mathbb{S}D(\mathcal{A}),
\end{equation}
where $\mathcal{A}_{\mathbb{S}}$ satisfies
\begin{equation} \label{2.1.24}
\mathcal{A}_{\mathbb{S}}=
\begin{pmatrix}
	A+\mu-SB_vK  & 0  \\
	B_vK  & -\alpha  \\
\end{pmatrix}.
\end{equation}
Here  $S B_v$ is given by (\ref{2.1.13.1}) and $K$ is given by (\ref{mys01061131}).  According to the Lemma \ref{lem1.2.2},  the operator $A+\mu-S B_vK=A+\mu+\sum_{i=1}^N P_{\phi_i}\theta K_i$ generates an exponentially stable $C_0$-semigroup on $L^2(\Omega)$ with $\theta=-\alpha-\mu$. Owing  to the block-triangle structure and \cite[Lemma 5.1]{Weiss1997TAC}, the operator $\mathcal{A}_{\mathbb{S}}$ generates an exponentially stable $C_0$-semigroup $e^{\mathcal{A}_{\mathbb{S}}t}$ on $L^2(\Omega)\times L^2(\Gamma_1)$. Therefore, the operator $\mathcal{A}$ also generates an exponentially stable $C_0$-semigroup on $L^2(\Omega)\times L^2(\Gamma_1)$ due to the similarity (\ref{2.xx}).
\end{proof}
\vskip0.2cm
\begin{remark} \label{ReFH}
We point out that   Theorems \ref{thm2.2} is better than
the results in  \cite{Barbu2012} and \cite{Barbu2013}, where
the additional assumption that
  the eigenfunctions $\phi_j, \ j\leq N$ are independent on $L^2(\Gamma_1)$ must be required.
\end{remark}
\section{Preliminaries for observer design}

 This section is devoted to the preliminaries on the observer design
 that is closely related to  the adjoint of the  operator  $\mathcal{A}$   given by \dref{2.1.22}.
 We first  compute the adjoint operators of the $A$, $B $, $B_v$, $K$ and $S$.
 Since the adjoint  of $B$ has been  given  by \dref{1.1.7} and
   $A^*=A$,
 we  only need to compute  the  adjoint operators of $B_v$, $K$ and $S$.

By  (\ref{Fh2022151515}),   the adjoint operator $B_v^{*}\in \mathcal{L} \big(L^2(\Gamma_1),\mathbb{R}^N\big)$ of $B_v$ satisfies
\begin{equation}
\label{mys01091645}
\begin{array}{ll}
	\disp \langle c,B_v^*g \rangle_{\R^N}&\disp = \langle B_v c,g \rangle_{L^2(\Gamma_1)}
     = \int_{\Gamma_1}\sum_{j=1}^{N}c_j\phi_j(x)g(x)dx \crr
	 &\disp=\sum_{j=1}^{N}c_j\int_{\Gamma_1}\phi_j(x)g(x)dx
\end{array}
\end{equation}
for all $c=(c_1,c_2,\cdots,c_N)^{\top}\in \R^N$ and $g\in L^2(\Gamma_1)$. As a result,
\begin{equation}\label{mys01091933}
	B_v^*g=\left(\int_{\Gamma_1}\phi_1(x)g(x)dx,\cdots,\int_{\Gamma_1}\phi_N(x)g(x)dx \right)^{\top}, \quad \forall \ g\in L^2(\Gamma_1).
\end{equation}
Similarly, it follows from (\ref{mys01061131})
  that   $K^*\in \mathcal{L}\big(\R^N,L^2(\Omega)\big)$    satisfies
\begin{equation} \label{mys01091727}
\begin{array}{ll}
	 \langle f,K^{*} c \rangle_{L^2(\Omega)} &\disp =\langle Kf,c \rangle_{\R^N}
	= \sum_{i=1}^N c_i \int_{\Omega}f(x) \sum_{j=1}^{N}l_{ij}\phi_j(x)dx\crr
	&\disp =\int_{\Omega}f(x) \sum_{i,j=1}^{N}c_i l_{ij}\phi_j(x)dx
\end{array}
\end{equation}
for all $c=(c_1,c_2,\cdots,c_N)^{\top}\in \R^N$ and $f\in L^2(\Omega)$.  \dref{mys01091727} implies  that
\begin{equation} \label{mys01091956}
	 K^*c= \sum_{i,j=1}^{N}c_i l_{ij}\phi_j(x), \quad \forall \ c=(c_1,c_2,\cdots,c_N)^{\top}\in \R^N.
\end{equation}

  To   compute $S^*$, we suppose that $\xi_f(x)$ is the solution of the following elliptic equation
\begin{equation}\label{mys01091845}
	\begin{cases}
		\disp \Delta \xi_f(x) =-(\alpha+\mu)\xi_f(x) +f(x)    \ \ x \in \Omega, \crr
		\disp \xi_f(x)=0 ,\ x\in \Gamma_0;\ \ \frac{\partial \xi_f(x)}{\partial \nu} =0,\ x\in \Gamma_1,
	\end{cases}	
\end{equation}
where $\alpha$ and $\mu $ satisfy  (\ref{2.1.11}).
Owing to  Fredholm alternative theorem,  equation \dref{mys01091845}
 admits a unique solution  $\xi_f$ for each inhomogeneous term $f\in L^2(\Omega)$. So the   function $\xi_f(x)$  makes sense.
  In view of  (\ref{2.1.12}),
  for any $g\in L^2(\Gamma_1)$ and $f\in L^2(\Omega)$,
   the adjoint operator $S^*\in \mathcal{L}\big(L^2(\Omega),L^2(\Gamma_1)\big)$ satisfies
\begin{equation} \label{mys01091850}
\begin{array}{ll}
	\langle g,S^{*}f\rangle_{L^2(\Gamma_1)}&\disp=\langle Sg,f\rangle_{L^2(\Omega)}
	= \int_{\Omega}-\varphi_g(x) f(x) dx\crr
	&\disp=\int_{\Omega}-\varphi_g(x)(\Delta \xi_f(x)+(\alpha+\mu)\xi_f(x))dx\crr
	&\disp=\int_{\Omega}(\Delta \varphi_g(x) \xi_f(x)-\varphi_g(x) \Delta \xi_f(x))dx\crr
	&\disp=\int_{\Gamma}\left(\frac{\partial \varphi_g(x)}{\partial \nu}\xi_f(x)-\frac{\partial \xi_f(x)}{\partial \nu}\varphi_g(x)\right)dx\crr
	&\disp=\int_{\Gamma_1}g(x)\xi_f (x) dx ,
\end{array}
\end{equation}
 which yields
\begin{equation}\label{mys01091957}
	S^* f= \xi_f, \qquad \forall \ f\in L^2(\Omega),
\end{equation}
 where $\xi_f$ satisfies (\ref{mys01091845}).

With the operators $B^*, A^*  $,  $B_v^*$, $K^*$ and $S^*$ at hand, a simple computation shows that
the operator $\mathcal{A}^{*}: D(\mathcal{A}^{*})\subset L^2(\Omega)\times L^2(\Gamma_1)\rightarrow L^2(\Omega)\times L^2(\Gamma_1)$, the adjoint of the  operator  $\mathcal{A}$   given by \dref{2.1.22},   is
 \begin{equation} \label{mys01092134}
	\disp \mathcal{A}^{*}=
	\begin{pmatrix}
		\disp A+\mu  & \disp -K^{*}B_v^{*} \\
		\disp B^{*}  & \disp -\alpha I-S^{*}K^{*}B_v^{*}  \\
	\end{pmatrix}, \quad D(\mathcal{A}^{*})= D(A)\times L^2(\Gamma_1).
\end{equation}

\begin{lemma}\label{lem3.2}
	In addition to Assumption \ref{assumption 1.1},  suppose that  $\alpha>0$  satisfies
	\begin{equation} \label{mys202202161454}
		-\alpha-\mu\neq \lambda_j, \quad j\in \mathbb{Z}_+.
	\end{equation}
Then, the operator $\mathcal{A}^{*}$  given by \dref{mys01092134} generates an exponentially stable $C_0$-semigroup on $L^2(\Omega)\times L^2(\Gamma_1)$,
where  $B^*$,  $B_v^{*}$, $K^{*}$ and $S^{*}$  are given by \dref{1.1.7}, (\ref{mys01091933}), (\ref{mys01091956}) and (\ref{mys01091957}), respectively.
\end{lemma}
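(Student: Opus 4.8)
The plan is to deduce the stability of $\mathcal{A}^{*}$ directly from the stability of $\mathcal{A}$ already proved in Theorem~\ref{thm2.2}, via a duality argument on the Hilbert space $\mathcal{H}:=L^2(\Omega)\times L^2(\Gamma_1)$. First I would note that the hypotheses of the lemma are exactly those of Theorem~\ref{thm2.2}: condition \eqref{mys202202161454}, namely $-\alpha-\mu\neq\lambda_j$ for every $j\in\mathbb{Z}_{+}$, is the same as $\alpha+\mu+\lambda_j\neq0$, while Assumption~\ref{assumption 1.1} and $\alpha>0$ are common to both statements. I would also recall that, as verified in the computation preceding the lemma, the operator $\mathcal{A}^{*}$ written out in \eqref{mys01092134} is precisely the Hilbert-space adjoint of the operator $\mathcal{A}$ in \eqref{2.1.22}, assembled from the adjoints $B^{*},A^{*},B_v^{*},K^{*},S^{*}$ computed in this section.

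The core step is then standard. By Theorem~\ref{thm2.2}, $\mathcal{A}$ generates an exponentially stable $C_0$-semigroup $e^{\mathcal{A}t}$ on $\mathcal{H}$, i.e.\ $\|e^{\mathcal{A}t}\|_{\mathcal{L}(\mathcal{H})}\leq Me^{-\omega t}$ for all $t\geq0$ and some constants $M,\omega>0$. Since $\mathcal{H}$ is a Hilbert space, hence reflexive, the adjoint family $\{(e^{\mathcal{A}t})^{*}\}_{t\geq0}$ is again a $C_0$-semigroup on $\mathcal{H}$ whose infinitesimal generator is precisely $\mathcal{A}^{*}$ (see, e.g., \cite[Corollary~10.6, p.~41]{Pazy1983}). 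Because $\|(e^{\mathcal{A}t})^{*}\|_{\mathcal{L}(\mathcal{H})}=\|e^{\mathcal{A}t}\|_{\mathcal{L}(\mathcal{H})}\leq Me^{-\omega t}$, the semigroup generated by $\mathcal{A}^{*}$ is exponentially stable, which is exactly the assertion of the lemma.

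As an alternative that avoids invoking the abstract adjoint-semigroup theorem, one can take adjoints in the similarity \eqref{2.xx}: as $\mathbb{S}$ and $\mathbb{S}^{*}$ are isomorphisms of $\mathcal{H}$, this yields $\mathcal{A}^{*}=\mathbb{S}^{*}\,\mathcal{A}_{\mathbb{S}}^{*}\,(\mathbb{S}^{*})^{-1}$ with $\mathcal{A}_{\mathbb{S}}^{*}=\left(\begin{smallmatrix}(A+\mu-SB_vK)^{*} & K^{*}B_v^{*}\\ 0 & -\alpha\end{smallmatrix}\right)$, which is block upper-triangular. By Lemma~\ref{lem1.2.2}, $A+\mu-SB_vK=A+\mu+\sum_{i=1}^{N}P_{\phi_i}\theta K_i$ generates an exponentially stable $C_0$-semigroup on $L^2(\Omega)$, so its adjoint does as well; the scalar $-\alpha<0$ generates $e^{-\alpha t}$; hence \cite[Lemma~5.1]{Weiss1997TAC} gives that $\mathcal{A}_{\mathbb{S}}^{*}$, and therefore $\mathcal{A}^{*}$ by similarity, generates an exponentially stable $C_0$-semigroup. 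This route merely unpacks the duality argument. The only point deserving care — and the closest thing to an obstacle — is the bookkeeping: verifying that the hypotheses of Lemma~\ref{lem3.2} coincide with those of Theorem~\ref{thm2.2} and that the displayed $\mathcal{A}^{*}$ is genuinely the adjoint of $\mathcal{A}$; once these are in place the conclusion is immediate.
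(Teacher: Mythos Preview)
Your main duality argument is correct and shorter than the paper's own proof. The paper does not invoke the adjoint-semigroup theorem; instead it introduces a fresh similarity transformation $\mathbb{T}(f,g)^{\top}=(f,g-S^{*}f)^{\top}$, computes $\mathbb{T}\mathcal{A}^{*}\mathbb{T}^{-1}$ explicitly, uses the adjoint Sylvester identity $S^{*}(A+\mu)+\alpha S^{*}=B^{*}$ to kill the $(2,1)$ block, and then appeals to the block-triangular structure together with the spectral argument $\sigma_p(A+\mu-K^{*}B_v^{*}S^{*})=\sigma_p(A+\mu-SB_vK)$ to conclude. Your ``alternative'' route is in fact this very proof in disguise: one checks that $\mathbb{T}=(\mathbb{S}^{*})^{-1}$, so the paper's $\mathcal{A}^{*}_{\mathbb{T}}$ is exactly your $\mathcal{A}_{\mathbb{S}}^{*}$. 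What your primary approach buys is brevity---once Theorem~\ref{thm2.2} is in hand, the lemma is a one-line consequence of $\|(e^{\mathcal{A}t})^{*}\|=\|e^{\mathcal{A}t}\|$ on a Hilbert space. What the paper's approach buys is an explicit block-triangularization of $\mathcal{A}^{*}$, which makes the observer error dynamics in Section~5 transparent and keeps the argument self-contained without citing the adjoint-semigroup result.
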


\begin{proof}
Similarly to the proof  in Theorem \ref{thm2.2}, we introduce the  following transformation
\begin{equation} \label{mys01272011}
\mathbb{T}(f,g)^{\top}=(f,g-S^{*}f)^{\top}, \quad\forall\  (f,g)^{\top}\in L^2(\Omega)\times L^2(\Gamma_1),
\end{equation}
where $S^{*}\in\mathcal{L} \big(L^2(\Gamma_1), L^2(\Omega)\big)$ is given by (\ref{mys01091957}).
 By a simple computation, we can conclude that $\mathbb{T}\in L^2(\Omega)\times L^2(\Gamma_1)$ is invertible and its inverse is
\begin{equation}
\mathbb{T}^{-1}(f,g)^{\top}=(f,g+S^{*}f)^{\top}, \qquad \forall\ (f,g)^{\top}\in L^2(\Omega)\times L^2(\Gamma_1).
\end{equation}
Furthermore,  we obtain
\begin{equation}\label{mys01272014}
\mathbb{T}\mathcal{A}^{*}\mathbb{T}^{-1}=\mathcal{A}^{*}_{\mathbb{T}}, \qquad D(\mathcal{A}^{*}_{\mathbb{T}})=\mathbb{T}D(\mathcal{A}^{*}),
\end{equation}
where $\mathcal{A}^{*}_{\mathbb{T}}$ satisfies
\begin{equation} \label{mys01272020}
\disp \mathcal{A}^{*}_{\mathbb{T}}=
\begin{pmatrix}
	\disp A+\mu-K^{*}B_v^{*}S^{*}  & -K^{*}B_v^{*}  \\
	\disp -S^{*}(A+\mu)-\alpha S^{*}+B^{*}  & -\alpha  \\
\end{pmatrix}.
\end{equation}
 Owing to \dref{2.1.10}, we have  $-S^{*}(A+\mu)-\alpha S^{*}+B^{*}=0$ and hence
\begin{equation} \label{20222281542}
\disp \mathcal{A}^{*}_{\mathbb{T}}=
\begin{pmatrix}
	\disp A+\mu-K^{*}B_v^{*}S^{*}  & -K^{*}B_v^{*}  \\
	\disp 0  & -\alpha  \\
\end{pmatrix}.
\end{equation}

Since $A+\mu$ generates an analytic semigroup $e^{(A+\mu)t}$ on $L^2(\Omega)$ and $K^{*}B_v^{*}S^{*}$ is bounded, it follows from \cite[Corollary 2.2, p.81]{Pazy1983} that $A+\mu-K^{*}B_v^{*}S^{*}$ also generates an analytic semigroup on $L^2(\Omega)$. The point spectrum satisfies $ \sigma_p(A+\mu-K^{*}B_v^{*}S^{*})=\sigma_p(A+\mu-SB_vK)\subset\{s\ |\ {\rm Re}\, s <0\}$. Noting that $A+\mu-K^{*}B_v^{*}S^{*}$
is the inverse of a compact operator, it follows from \cite[Theorem4.3, p.118]{Pazy1983} that the operator $A+\mu-K^{*}B_v^{*}S^{*}$ generates an exponentially stable $C_0$-semigroup on $L^2(\Omega)$. Owing to the block-triangle structure and \cite[Lemma 5.1]{Weiss1997TAC}, the operator $\mathcal{A}^{*}_{\mathbb{T}}$ generates an exponentially stable $C_0$-semigroup $e^{\mathcal{A}^{*}_{\mathbb{T}}t}$ on $L^2(\Omega)\times L^2(\Gamma_1)$. Therefore, the operator $\mathcal{A}^{*}$ also generates an exponentially stable $C_0$-semigroup on $L^2(\Omega)\times L^2(\Gamma_1)$ due to the similarity (\ref{mys01272014}).
\end{proof}

\section{Observer design}
Inspired by the dynamic compensation  method in   \cite{Feng2021},
we add, in terms of $\phi_1, \phi_2, \cdots, \phi_N$ given by  (\ref{1.1.5}),  the sensor dynamic to
 system (\ref{1.1}):
\begin{equation} \label{3.2.1}
	\begin{cases} w_{t}(x,t)-\Delta w(x,t)-\mu w(x,t) =0,\qquad (x,t)\in \Omega\times	(0,+\infty),\crr
		w(x,t)=0,\qquad \qquad (x,t)\in \Gamma_0\times(0,+\infty),\crr
	\disp	\frac{\partial w(x,t)}{\partial \nu} = u(x,t), \qquad (x,t)\in \Gamma_1\times(0,+\infty),\crr
		\disp p_{t}(\cdot,t)=-\alpha p(\cdot,t)+B^{*}w(\cdot,t),  \qquad \mbox{in}\ \  \Gamma_1, \crr
		y_p(t)=\big( y_1(t),y_2(t),\cdots,y_N(t) \big)^{\top},
		\qquad t\in (0,+\infty),
	\end{cases}
\end{equation}
where $\alpha>0$ is a tuning parameter, $p(\cdot,t)$ is an extended state, $B^{*}$ is given by (\ref{1.1.7}) and
\begin{equation}\label{FH2022122932}
y_i(t)=\int_{\Gamma_1} p(x,t)\phi_i(x)dx,\ \ i=1,2,\cdots,N.
\end{equation}
 By (\ref{1.2}), (\ref{1.1.6.1}), \dref{1.1.7} and (\ref{mys01091933}), system (\ref{3.2.1}) can be written abstractly as
\begin{equation}\label{mys01091942}
	\begin{cases} w_{t}(\cdot,t)=(\tilde{A}+\mu) w(\cdot,t)+Bu(\cdot,t), \crr
		\disp p_{t}(\cdot,t)=-\alpha p(\cdot,t)+ B^{*} w(\cdot,t), \crr
			y_p(t)=	B_{v}^{*} p(\cdot,t).
	\end{cases}
\end{equation}
Inspired by the  method in   \cite{Feng2021}, the observer of system (\ref{mys01091942}) can be designed as
\begin{equation} \label{3.2.5}
	\begin{cases} \hat{w}_{t}(x,t)=\Delta \hat{w}(x,t)+\mu \hat{w}(x,t)
		\disp  + K^{*}B_v^{*}[ p(\cdot,t)- \hat{p}(\cdot,t)],\quad x\in \Omega,\crr
		\hat{w}(x,t)=0,\qquad x\in \Gamma_0,\crr
		\disp \frac{\partial \hat{w}(x,t)}{\partial \nu} = u(x,t), \qquad x\in \Gamma_1,\crr
		\hat{p}_{t}(\cdot,t)=-\alpha  \hat{p}(\cdot,t)+B^{*}\hat{w}(\cdot,t)+S^{*}K^{*}B_v^{*}[p(\cdot,t)-\hat{p}(\cdot,t)] \quad \mbox{in} \ \ \Gamma_1,
	\end{cases}
\end{equation}
where $B_v^{*}$, $K^{*}$ and $S^{*}$ are given by (\ref{mys01091933}), (\ref{mys01091956}) and (\ref{mys01091957}), respectively.
%
%
%
Combining (\ref{1.1.7}), (\ref{mys01091933}),  (\ref{mys01091956}), (\ref{mys01091845}),  (\ref{mys01091957})  and (\ref{3.2.1}), the abstract observer (\ref{3.2.5}) can be
 written concretely
\begin{equation} \label{3.2.9}
	\begin{cases} \hat{w}_{t}(x,t)=\Delta \hat{w}(x,t)+\mu \hat{w}(x,t) \crr
		 \disp \qquad \quad \ \ +\sum_{i,j=1}^N l_{ij}\phi_j(x)\int_{\Gamma_1}\phi_i(x)\big(p(x,t)-\hat{p}(x,t)\big)dx ,\qquad x\in \Omega,\crr
		\hat{w}(x,t)=0,\qquad x\in \Gamma_0,\crr
		\disp\frac{\partial \hat{w}(x,t)}{\partial \nu} = u(x,t), \qquad x\in \Gamma_1,\crr
		\disp \hat{p}_{t}(x,t)=-\alpha  \hat{p}(x,t)+\hat{w}(x,t)   \crr
		\disp \qquad \quad \ \ +\sum_{i,j=1}^N l_{ij}\xi_{\phi_j}(x)\int_{\Gamma_1}\phi_i(x)\big(p(x,t)-\hat{p}(x,t)\big)dx,\quad x\in \Gamma_1 ,  \qquad
	\end{cases}
\end{equation}
where $\xi_{\phi_j}(x)$,  satisfies
\begin{equation}\label{mys01092051}
	\begin{cases}
		\disp \Delta \xi_{\phi_j}(x)=(-\alpha-\mu)\xi_{\phi_j}(x)+\phi_j(x),  \ \ x\in \Omega, \crr
		\disp \xi_{\phi_j}(x)=0, \ x\in \Gamma_0; \ \ \frac{\partial \xi_{\phi_j}(x)}{\partial \nu} =0, \ x\in \Gamma_1,
	\end{cases} 	j=1,2,\cdots,N.
\end{equation}

\begin{theorem}
	In addition to Assumption \ref{assumption 1.1}, let $\alpha>0$ satisfy
	\begin{equation} \label{3.2.11}
	-\alpha-\mu\neq \lambda_j, \quad j\in \mathbb{Z}_+.
	\end{equation}
	Then, for any initial value $\big(w(\cdot,0),p(\cdot,0),\hat{w}(\cdot,0),\hat{p}(\cdot,0)\big)^{\top}\in [L^2(\Omega)\times L^2(\Gamma_1)]^2 $ and $\disp u\in L^2_{\rm loc}\big([0,\infty),L^2(\Gamma_1)\big)$, the observer (\ref{3.2.9}) of system (\ref{3.2.1}) admits a unique solution $\disp (\hat{w},\hat{p})^{\top}\in C\left([0,\infty), L^2(\Omega)\times L^2(\Gamma_1)\right)$ such that
	\begin{equation}\label{3.2.12}
		\disp \lim_{t\rightarrow\infty} e^{\omega t}\|w(\cdot,t)-\hat{w}(\cdot,t),p(\cdot,t)-\hat{p}(\cdot,t)\|_{L^2(\Omega)\times L^2(\Gamma_1)}=0,
	\end{equation}
	where $\omega$ is a positive constant that is independent of $t$.
\end{theorem}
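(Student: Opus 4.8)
The plan is to reduce everything to the error dynamics and to recognize that the latter is governed \emph{exactly} by the operator $\mathcal{A}^{*}$ of \dref{mys01092134}, so that the exponential stability established in Lemma \ref{lem3.2} applies verbatim. I would set $\tilde{w}(\cdot,t)=w(\cdot,t)-\hat{w}(\cdot,t)$ and $\tilde{p}(\cdot,t)=p(\cdot,t)-\hat{p}(\cdot,t)$, where $(w,p)$ solves the plant \dref{3.2.1} (equivalently \dref{mys01091942}) and $(\hat{w},\hat{p})$ solves the observer \dref{3.2.9} (equivalently \dref{3.2.5}). Subtracting the observer from the plant, the boundary input $u$ (equivalently the term $Bu$) occurs in both $w$-equations and therefore cancels; in particular $\tilde{w}$ satisfies the homogeneous boundary conditions $\tilde{w}|_{\Gamma_0}=0$ and $\partial\tilde{w}/\partial\nu|_{\Gamma_1}=0$, so it is driven by $A+\mu$ rather than by its extension $\tilde{A}+\mu$, and the residual forcing $-K^{*}B_v^{*}\tilde{p}$ is an element of $L^2(\Omega)$ since $K^{*}B_v^{*}\in\mathcal{L}(L^2(\Gamma_1),L^2(\Omega))$. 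A direct computation then yields
\begin{equation*}
\frac{d}{dt}\begin{pmatrix}\tilde{w}(\cdot,t)\\ \tilde{p}(\cdot,t)\end{pmatrix}
=\begin{pmatrix}A+\mu & -K^{*}B_v^{*}\\ B^{*} & -\alpha I-S^{*}K^{*}B_v^{*}\end{pmatrix}\begin{pmatrix}\tilde{w}(\cdot,t)\\ \tilde{p}(\cdot,t)\end{pmatrix}=\mathcal{A}^{*}\begin{pmatrix}\tilde{w}(\cdot,t)\\ \tilde{p}(\cdot,t)\end{pmatrix},
\end{equation*}
with $(\tilde{w}(\cdot,0),\tilde{p}(\cdot,0))^{\top}=(w(\cdot,0)-\hat{w}(\cdot,0),p(\cdot,0)-\hat{p}(\cdot,0))^{\top}\in L^2(\Omega)\times L^2(\Gamma_1)$.

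Next I would invoke Lemma \ref{lem3.2}: under \dref{3.2.11} the operator $\mathcal{A}^{*}$ generates an exponentially stable $C_0$-semigroup $e^{\mathcal{A}^{*}t}$ on $L^2(\Omega)\times L^2(\Gamma_1)$, so there are constants $M\geq 1$ and $\omega_0>0$ with $\|e^{\mathcal{A}^{*}t}\|\leq Me^{-\omega_0 t}$. Hence the error system has the unique mild solution $(\tilde{w},\tilde{p})^{\top}(t)=e^{\mathcal{A}^{*}t}(\tilde{w}(\cdot,0),\tilde{p}(\cdot,0))^{\top}\in C\big([0,\infty);L^2(\Omega)\times L^2(\Gamma_1)\big)$ and $\|(\tilde{w}(\cdot,t),\tilde{p}(\cdot,t))\|_{L^2(\Omega)\times L^2(\Gamma_1)}\leq Me^{-\omega_0 t}\|(\tilde{w}(\cdot,0),\tilde{p}(\cdot,0))\|_{L^2(\Omega)\times L^2(\Gamma_1)}$. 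Choosing any $\omega\in(0,\omega_0)$ gives \dref{3.2.12}.

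It then remains to establish the well-posedness of the observer itself, which I would do by writing $(\hat{w},\hat{p})^{\top}=(w,p)^{\top}-(\tilde{w},\tilde{p})^{\top}$ and arguing that the plant \dref{3.2.1} is well posed: for $u\in L^2_{\rm loc}\big([0,\infty),L^2(\Gamma_1)\big)$, the Neumann-controlled heat component satisfies $w\in C\big([0,\infty);L^2(\Omega)\big)$ and its trace $B^{*}w=w|_{\Gamma_1}$ belongs to $L^2_{\rm loc}\big([0,\infty),L^2(\Gamma_1)\big)$ by the standard regularity theory of parabolic equations with boundary input; consequently $p$, obtained from the ODE $\dot{p}=-\alpha p+B^{*}w$ by variation of constants, lies in $C\big([0,\infty);L^2(\Gamma_1)\big)$, so $(w,p)^{\top}\in C\big([0,\infty);L^2(\Omega)\times L^2(\Gamma_1)\big)$. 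Combining this with the previous step yields $(\hat{w},\hat{p})^{\top}\in C\big([0,\infty);L^2(\Omega)\times L^2(\Gamma_1)\big)$, and uniqueness of $(\hat{w},\hat{p})$ follows from the uniqueness of the mild solution of the error system and of $(w,p)$.

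I expect the principal difficulty to lie not in the stability conclusion --- which is essentially immediate from Lemma \ref{lem3.2} --- but in the functional-analytic bookkeeping around the unbounded operators: verifying that the observation term $B^{*}\hat{w}$ and the boundary integral in \dref{3.2.9} are well defined along the trajectory (which needs enough spatial regularity of $\hat{w}$, inherited from the analyticity of the semigroup generated by $A+\mu-K^{*}B_v^{*}S^{*}$ that appears in the block-triangular reduction of $\mathcal{A}^{*}$ in the proof of Lemma \ref{lem3.2}), and justifying that the subtraction of the two $w$-equations, carried out in $H^{-1}_{\Gamma_0}(\Omega)$, produces an error equation that actually lives in $L^2(\Omega)$ because its forcing $-K^{*}B_v^{*}\tilde{p}$ is bounded and $\partial\tilde{w}/\partial\nu|_{\Gamma_1}=0$. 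Once these regularity points are settled, the assertion \dref{3.2.12} is exactly the decay of $e^{\mathcal{A}^{*}t}$.
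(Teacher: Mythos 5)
Your proposal is correct and follows essentially the same route as the paper: form the error $(\tilde{w},\tilde{p})=(w-\hat{w},\,p-\hat{p})$, observe that the input $u$ cancels so the error dynamics are exactly $\frac{d}{dt}(\tilde{w},\tilde{p})^{\top}=\mathcal{A}^{*}(\tilde{w},\tilde{p})^{\top}$, apply Lemma \ref{lem3.2} for the exponential decay, and recover $(\hat{w},\hat{p})=(w,p)-(\tilde{w},\tilde{p})$ from the well-posedness of the plant, with uniqueness by linearity. Your additional remarks on the regularity of the boundary trace $B^{*}w$ and on choosing $\omega$ strictly below the decay rate only make explicit points the paper treats as standard.
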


\begin{proof}
	For any $\big(w(\cdot,0),p(\cdot,0)\big)^{\top}\in L^2(\Omega)\times L^2(\Gamma_1) $ and $u\in L^2_{\rm loc}\left([0,\infty),L^2(\Gamma_1)\right)$, it is well known that the control plant (\ref{3.2.1}) admits a unique solution $(w,p)^{\top}\in C\big([0,\infty), L^2(\Omega)\times L^2(\Gamma_1)\big)$ such that $y_{j}\in L^2_{\rm loc}[0,\infty)$ for any $ j=1,2,\cdots,N$. Let
	\begin{equation}\label{3.2.13}
		\begin{cases}
			\tilde{w}(x,t)=w(x,t)-\hat{w}(x,t), \quad (x,t)\in \Omega\times[0,\infty), \crr
			\tilde{p}(s,t)=p(s,t)-\hat{p}(s,t),  \qquad (s,t)\in \Gamma_1\times[0,\infty).
		\end{cases}
	\end{equation}
Then the errors are governed by
   \begin{equation} \label{mys01092105}
	\begin{cases}  \tilde{w}_{t}(x,t)=\Delta \tilde{w}(x,t)+\mu \tilde{w}(x,t)- K^{*}B_v^{*} \ \tilde{p}(\cdot,t),\ \ x\in \Omega,\crr
		\disp\tilde{w}(x,t)=0,\qquad x\in \Gamma_0,\crr
		\disp\frac{\partial \tilde{w}(x,t)}{\partial \nu} = 0, \qquad x\in \Gamma_1,\crr
		\disp \tilde{p}_{t}(\cdot,t)=-\alpha  \tilde{p}(\cdot,t)+B^{*}\tilde{w}(\cdot,t)-S^{*}K^{*}B_v^{*} \ \tilde{p}(\cdot,t) \quad \mbox{in} \ \ \Gamma_1.
	\end{cases}
   \end{equation}
In terms of the operator $\mathcal{A}^{*} $ given by \dref{mys01092134}, system (\ref{mys01092105}) can be written as
\begin{equation} \label{3.2.15}
	\frac{d}{dt}\big(\tilde{w}(\cdot,t),\tilde{p}(\cdot,t)\big)^{\top}
=\mathcal{A}^{*}\big(\tilde{w}(\cdot,t),\tilde{p}(\cdot,t)\big)^{\top}.
\end{equation}

By   Lemma \ref{lem3.2},  the operator $\mathcal{A}^{*}$   generates an exponentially stable analytic semigroup  $e^{\mathcal{A}^{*}t}$ on $L^2(\Omega)\times L^2(\Gamma_1)$.
Hence, the error system \dref{mys01092105} with initial
state $\big(\tilde{w}(\cdot,0),\tilde{p}(\cdot,0)\big)^{\top}\in L^2(\Omega)\times L^2(\Gamma_1)$ admits a unique solution  $\big(\tilde{w}(\cdot,t),\tilde{p}(\cdot,t)\big)^{\top}\in C\big([0,\infty);L^2(\Omega)\times L^2(\Gamma_1)\big)$  such that
\begin{equation}\label{3.2.20}
	\lim_{t\rightarrow\infty} e^{\omega t}\|\tilde{w}(\cdot,t) ,\tilde{p}(\cdot,t)\|_{L^2(\Omega)\times L^2(\Gamma_1)}=0,
\end{equation}
where $\omega$ is a positive constant that is independent of $t$.
Let $\big(\hat{w}(\cdot,t),\hat{p}(\cdot,t)\big)=\big(w(\cdot,t)-\tilde{w}(\cdot,t),p(\cdot,t)-\tilde{p}(\cdot,t)\big)$, it shows that such a
defined $(\hat{w}, \hat{p})^{\top} \in C\big([0,\infty);L^2(\Omega)\times L^2(\Gamma_1)\big)$ is a solution of system (\ref{3.2.9}) or equivalently system \dref{3.2.5}. Moreover, (\ref{3.2.12}) holds due to (\ref{3.2.13}) and (\ref{3.2.20}). Owing to the linearity of system (\ref{3.2.9}), the solution is unique.
\end{proof}

\section{Conclusions}
In this paper, we extend the results in \cite{Feng2021} to the general multi-domain.
By introducing the ODE   actuator/sensor dynamics,
the difficulties caused by instability can be solved
 by  the newly developed dynamics
compensation approach \cite{Feng2008}, \cite{Feng2009}.
Since both the full state feedback
law and the state observer are designed, the observer based output feedback is actually proposed to stabilize exponentially  the unstable multi-dimensional  heat system.
The  dynamics compensation approach may also used to the other multi-dimensional PDEs.
Our future work is the stabilization and observation of the
multi-dimensional
  unstable wave equation.

%

%

\section{Appendix}

\begin{lemma}
	\label{lem6.1.2}
	Let the operator $A$ be given by (\ref{1.2}).
For any $\lambda\in \sigma(A)$, suppose that
$\psi_1,\psi_2,\cdots,\psi_{m_\lambda}$ are the eigenfunctions
  corresponding to
   eigenvalue $\lambda $ of $A$, where $m_{\lambda}$ is the geometric multiplicity of $\lambda$.
   If  $\psi_1,\psi_2,\cdots,\psi_{m_\lambda}$
are linearly independent on $L^2(\Omega)$, then the following matrix   is invertible
\begin{equation} \label{FH20222262127}
   \begin{pmatrix}
             \langle \psi_{1},\psi_{1} \rangle_{L^2(\Gamma_1)}&
             \langle \psi_{1},\psi_{2} \rangle_{L^2(\Gamma_1)}&\cdots&\langle \psi_{1},\psi_{m_{\lambda}} \rangle_{L^2(\Gamma_1)} \\
              \langle \psi_{2},\psi_{1} \rangle_{L^2(\Gamma_1)}&
             \langle \psi_{2},\psi_{2} \rangle_{L^2(\Gamma_1)}&\cdots&\langle \psi_{2},\psi_{m_{\lambda}} \rangle_{L^2(\Gamma_1)} \\
                \vdots&\vdots&\cdots&\vdots\\
                   \langle \psi_{m_{\lambda}},\psi_{1} \rangle_{L^2(\Gamma_1)}&
             \langle \psi_{m_{\lambda}},\psi_{2} \rangle_{L^2(\Gamma_1)}&\cdots&\langle \psi_{m_{\lambda}},\psi_{m_{\lambda}} \rangle_{L^2(\Gamma_1)} \\
                  \end{pmatrix}.
 \end{equation}
 \end{lemma}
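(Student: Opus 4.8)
The plan is to view the displayed matrix, call it $M=\big(\langle\psi_i,\psi_j\rangle_{L^2(\Gamma_1)}\big)_{1\le i,j\le m_\lambda}$, as the Gram matrix in $L^2(\Gamma_1)$ of the boundary traces $\psi_1|_{\Gamma_1},\dots,\psi_{m_\lambda}|_{\Gamma_1}$, and to reduce its invertibility to a unique continuation property for eigenfunctions of $A$. First I would observe that $M$ is symmetric and positive semidefinite, since for every $c=(c_1,\dots,c_{m_\lambda})^{\top}\in\R^{m_\lambda}$ one has
\[
\langle Mc,c\rangle_{\R^{m_\lambda}}=\Big\|\sum_{i=1}^{m_\lambda}c_i\psi_i\Big\|_{L^2(\Gamma_1)}^{2}\ge 0 .
\]
Hence $M$ is invertible if and only if it is positive definite, i.e. if and only if the only $c\in\R^{m_\lambda}$ with $\sum_{i=1}^{m_\lambda}c_i\psi_i=0$ almost everywhere on $\Gamma_1$ is $c=0$.

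So I would suppose $\langle Mc,c\rangle_{\R^{m_\lambda}}=0$ and set $\psi:=\sum_{i=1}^{m_\lambda}c_i\psi_i$. By \dref{1.1.5}, each $\psi_i\in D(A)\subset H^2(\Omega)$ satisfies $\Delta\psi_i=\lambda\psi_i$ in $\Omega$, $\psi_i|_{\Gamma_0}=0$ and $\partial\psi_i/\partial\nu|_{\Gamma_1}=0$; consequently $\psi\in H^2(\Omega)$ solves $\Delta\psi=\lambda\psi$ in $\Omega$, $\psi|_{\Gamma_0}=0$ and $\partial\psi/\partial\nu|_{\Gamma_1}=0$, while in addition $\psi|_{\Gamma_1}=0$. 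Thus the Cauchy data $(\psi,\partial\psi/\partial\nu)$ of $\psi$ vanish on $\Gamma_1$.

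The decisive step is unique continuation from the boundary: a solution of the elliptic equation $\Delta\psi-\lambda\psi=0$ in the connected open set $\Omega$ whose Cauchy data vanish on a non-empty relatively open portion $\gamma\subset\Gamma_1$ of the $C^2$-boundary must vanish identically in $\Omega$. I would justify this by extending $\psi$ by zero across $\gamma$ to a strictly larger connected open set $\Omega'$ (a two-sided collar of $\gamma$ glued to $\Omega$, chosen so as to stay away from $\Gamma_0$); because both $\psi$ and $\partial\psi/\partial\nu$ vanish on $\gamma$, Green's formula shows that no singular terms arise across $\gamma$, so the extension $\tilde\psi\in H^1(\Omega')$ still satisfies $\Delta\tilde\psi=\lambda\tilde\psi$ in the distributional sense on $\Omega'$. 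By interior elliptic regularity $\tilde\psi$ is real-analytic in $\Omega'$, and it vanishes on the non-empty open set $\Omega'\setminus\overline{\Omega}$; by analyticity and connectedness of $\Omega'$ it vanishes on all of $\Omega'$, hence $\psi\equiv 0$ in $\Omega$. (Alternatively one may directly quote a standard boundary unique continuation theorem for second order elliptic equations.)

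Finally $\psi=\sum_{i=1}^{m_\lambda}c_i\psi_i=0$ in $L^2(\Omega)$, and since $\psi_1,\dots,\psi_{m_\lambda}$ are by hypothesis linearly independent in $L^2(\Omega)$ this forces $c=0$. Therefore $M$ is positive definite, and in particular invertible. The only genuinely delicate point is the unique continuation step under the mere $C^2$-regularity of $\Gamma$, together with the implicit assumption that $\Gamma_1$ has non-empty relative interior in $\Gamma$ (so that a genuine open piece $\gamma$ is available); everything else is elementary linear algebra combined with Green's formula.
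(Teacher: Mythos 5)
Your proof is correct and follows essentially the same route as the paper: both reduce invertibility of the displayed Gram matrix to linear independence of the traces $\psi_i|_{\Gamma_1}$ in $L^2(\Gamma_1)$, and then eliminate a vanishing linear combination by observing that it solves the eigenvalue equation with zero Cauchy data on $\Gamma_1$, which forces it to vanish identically. The only difference is that the paper delegates this unique continuation step to a cited result (\cite[Theorem 2.4]{Feng2015}), whereas you supply the standard extension-by-zero and real-analyticity argument yourself.
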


\begin{proof}
 Since  the operator $A $    is self-adjoint and negative with compact resolvents,
$\sigma(A)$ consists of isolated eigenvalues of finite
 geometric multiplicity only.
Hence,   $\psi_{k }$ satisfies
	\begin{equation} \label{6.1.5}
		\begin{cases} \disp \Delta\psi_{k }(x) =\lambda  \psi_{k }(x), \quad x\in \Omega,\crr
			\disp \psi_{k }(x)=0,\quad x\in \Gamma_0,\crr
			\disp \frac{\partial \psi_{k }(x) }{\partial \nu}= 0, \quad x\in \Gamma_1,
		\end{cases},\ \ k=1,2,\cdots,m_{\lambda}.
	\end{equation}
Noting that the matrix in \dref{FH20222262127} happens to be a Gram matrix of the sequence $\psi_1,\psi_2,\cdots,\psi_{m_\lambda}$,    the proof will be accomplished if we can prove that
 $\psi_1,\psi_2,\cdots,\psi_{m_\lambda}$
are linearly independent on $L^2(\Gamma_1)$ (\cite[p.441, Theorem 7.2.10]{Horn2013}).

 Actually, suppose    there exist   $ \ell_1,\ell _2,\cdots,\ell _{m_\lambda}\in \R $ such that
	\begin{equation} \label{6.1.4}
	\ell _1\psi_{1}(x)+\ell _{2}\psi_{2}(x)+\cdots+\ell _{{m_\lambda}}\psi_{m_\lambda}(x)=0 , \quad x\in \Gamma_1.
	\end{equation}
If we let
\begin{equation} \label{2022218735}
\beta_{\ell} (x)=	\ell _1\psi_{1}(x)+\ell _{2}\psi_{2}(x)+\cdots+\ell _{m_\lambda}\psi_{m_\lambda}(x) , \quad x\in \Omega,
	\end{equation}
then it follows from \dref{6.1.5} that
\begin{equation} \label{FH6.1.5}
		\begin{cases} \disp \Delta \beta_{\ell}(x) =\lambda  \beta_{\ell}(x), \quad x\in \Omega,\crr
			\disp \beta_{\ell}(x)=0,\quad x\in \Gamma ,\crr
			\disp \frac{\partial \beta_{\ell}(x)}{\partial \nu} =0, \quad x\in \Gamma_1 ,
		\end{cases}
	\end{equation}
 which  admits a unique solution $\beta_{\ell }(x)\equiv0$, $x\in\Omega$  (\cite[Theorem 2.4]{Feng2015}).
 Since   $\psi_1,\psi_2,\cdots,\psi_{m_\lambda}$
are linearly independent on $L^2(\Omega)$,
 we can conclude  that $\ell_1=\ell_2=\cdots=\ell_{m_\lambda}=0$.
  As a result,   $\psi_1,\psi_2,\cdots,\psi_{m_\lambda}$ are linearly independent on $L^2(\Gamma_1)$.
  \end{proof}

\begin{lemma}\label{lem6.x}
Let  $N$  be a positive integer,
  $F_N\in \R^{N\times N}$ and
\begin{equation}\label{FH20222261601}
		\Lambda_{N}=
 {\rm diag}(J_1,\cdots,J_p),\ \
	J_{j}={\rm diag}(\lambda_j,\cdots,\lambda_j)\in\R^{n_j\times n_j} ,\ \ j=1,2,\cdots,p,
	\end{equation}
where $n_1,n_2,\cdots,n_p$  are  $p$ positive integers such  that $n_1+n_2+\cdots+n_p=N$.
 Suppose that
  \begin{equation}\label{FH20222261634}
  \lambda_k\neq\lambda_j \ \mbox{if and only if}\ \  k\neq j, \ \ \ k,j=1,2,\cdots,p
  \end{equation}
  and the matrix  $F_N $ can be written as
 \begin{equation}\label{02252019}
	 	F_{N}=
\begin{pmatrix}
			P_1 & * & \cdots & * \\
			* & P_2  & \cdots & * \\
			\vdots & \vdots  & \ddots & \vdots  \\
			* & *  & \cdots & P_{p}  \\
		\end{pmatrix}  , \ \
P_{j}\in\R^{n_j\times n_j},\ \
j=1,2,\cdots,p.
	\end{equation}
Then,   the pair $(\Lambda_N,F_N)$ is controllable provided  the matrix determinant of   $P_j$ satisfies
\begin{equation}\label{F20221628226}
\left| P_j\right|\neq 0,\ \ \ j=1,2,\cdots,p.
\end{equation}
\end{lemma}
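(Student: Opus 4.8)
The plan is to verify controllability via the Popov--Belevitch--Hautus (PBH) rank test: the pair $(\Lambda_N,F_N)$ is controllable if and only if the $N\times 2N$ matrix $[\lambda I-\Lambda_N \mid F_N]$ has ${\rm rank}\ N$ for every $\lambda\in\C$, and since $\lambda I-\Lambda_N$ is already invertible whenever $\lambda$ is not an eigenvalue of $\Lambda_N$, it suffices to check this at the eigenvalues. By \dref{FH20222261601} and \dref{FH20222261634}, the eigenvalues of $\Lambda_N$ are precisely $\lambda_1,\dots,\lambda_p$, so I would fix an index $k\in\{1,2,\dots,p\}$ and show that $[\lambda_k I-\Lambda_N \mid F_N]$ admits no nonzero left null vector, i.e.\ has full row rank $N$.

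First I would exploit the block-diagonal structure \dref{FH20222261601}. Writing a candidate left null vector $v=(v_1^{\top},v_2^{\top},\dots,v_p^{\top})^{\top}$ in the block partition induced by $n_1,n_2,\dots,n_p$, the equation $v^{\top}(\lambda_k I-\Lambda_N)=0$ decouples into $(\lambda_k-\lambda_j)\,v_j^{\top}=0$ for each $j=1,\dots,p$. Since $\lambda_k\neq\lambda_j$ for all $j\neq k$ by \dref{FH20222261634}, this forces $v_j=0$ for every $j\neq k$; that is, $v$ is supported entirely on the $k$-th block.

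Next I would use the diagonal block $P_k$ of $F_N$ from \dref{02252019}. Because $v$ is supported only on block $k$, the $k$-th block of the row vector $v^{\top}F_N$ equals $v_k^{\top}P_k$, and the arbitrary off-diagonal ``$*$'' blocks of $F_N$ never contribute. Hence $v^{\top}F_N=0$ yields $v_k^{\top}P_k=0$, and since $\left|P_k\right|\neq 0$ by \dref{F20221628226}, the matrix $P_k$ is invertible, so $v_k=0$. Therefore $v=0$, which means $[\lambda_k I-\Lambda_N \mid F_N]$ has full row rank $N$.

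As $k$ was arbitrary and the rank condition is automatic away from $\{\lambda_1,\dots,\lambda_p\}$, the PBH test is satisfied and $(\Lambda_N,F_N)$ is controllable. I do not expect a genuine obstacle here; the only point requiring a bit of care is the block bookkeeping — specifically, observing that once the left null vector has been localized to a single block, the unknown off-diagonal entries of $F_N$ become irrelevant, so only the invertibility of the diagonal blocks $P_j$ is needed.
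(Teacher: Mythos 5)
Your proof is correct and follows essentially the same route as the paper: the paper also applies the Hautus test (phrased as non-observability of $(\Lambda_N^{\top},F_N^{\top})$, i.e.\ the existence of a vector $V$ with $\Lambda_N V=\lambda_k V$ and $F_N^{\top}V=0$, which is exactly your left null vector), localizes $V$ to the $k$-th block using \dref{FH20222261634}, and then invokes the invertibility of $P_k$ to force $V=0$. The only difference is cosmetic — the paper argues by contradiction while you argue directly.
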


\begin{proof}
Otherwise, we can conclude that $(\Lambda_N^\top,F^{\top}_N)$ is not observable.
   By the Hautus test \cite[p.15, Remark 1.5.2]{Tucsnak 2009},
there exist $0\neq V\in \R^N$ and  $k \in \{ 1,2,\cdots, p\}$ such that
\begin{equation}\label{F20222261650}
   \Lambda_N V=\lambda_k V \ \ \mbox{and}\ \ F_N^{\top}V=0.
   \end{equation}
  Without   loss of   generality,
we suppose that
\begin{equation}\label{F20222261640}
V=\begin{pmatrix}
    V_1 \\
    V_2\\
    \vdots\\
    V_{p}
  \end{pmatrix},\ \ V_j=(v^j_1,\cdots ,v_{n_j}^j)^{\top},\ \ j=1,2,\cdots,n_j.
\end{equation}
The first equation of \dref{F20222261650}  becomes
\begin{equation}\label{F20222261650*}
   \Lambda_N V-\lambda_k V=\begin{pmatrix}
    (\lambda_1-\lambda_k) V_1 \\
    \vdots\\
    (\lambda_k-\lambda_k)V_2\\
    \vdots\\
    (\lambda_p-\lambda_k)V_{p}
  \end{pmatrix} = 0. \ \
   \end{equation}
 By the assumption \dref{FH20222261634}, $V_i=0$ when $i\neq k$. As a result,   $F_N^{\top}V=0$  implies that  $P_k^{ \top}V_k=0$. Owing  to the assumption \dref{F20221628226}, we can conclude that $V_k=0$ and hence  $V=0$. This is contradict to the fact $ V\neq 0$. Therefore,
 the pair $(\Lambda_N,F_N)$ is controllable.
\end{proof}

%
%
%

\end{document}